\newtheorem{theo}{Theorem}[section]
\newtheorem{lem}[theo]{Lemma}
\newtheorem{defi}{Definition}[section]
\newtheorem{rem}{Remark}[section]
\def\Div{\operatorname{div}}
\def\sgn{\operatorname{sgn}}
\def\R{\mathbb{R}}
\journal{Applied Numerical Mathematics (accepted)}
\begin{document}

\begin{frontmatter}

\title{A Semi-Lagrangian Scheme for the Game $p$-Laplacian via  $p$-averaging}


\author[rm]{M. Falcone\corref{cor1}}
\ead{falcone@mat.uniroma1.it}

\author[rm]{S. Finzi Vita\corref{cor1}}
\ead{finzi@mat.uniroma1.it}

\author[lc]{T. Giorgi\corref{cor2}}
\ead{tgiorgi@math.nmsu.edu}

\author[lc]{R.G. Smits}
\ead{rsmits@math.nmsu.edu}


\cortext[cor1]{ Funding to these authors has been provided by PRIN 2009 ``Modelli numerici per il calcolo scientifico ed applicazioni avanzate''.}
\cortext[cor2]{ Funding to this author was provided by the National Science
Foundation Grant \#DMS-1108992.}


\address[rm]{Dipartimento di Matematica, Sapienza Universit\`a di Roma\\ P.le Aldo Moro, 2 - 00185 Roma, Italy}
\address[lc]{Department of Mathematical Sciences, New Mexico State University\\ Las Cruces, NM 88003-8001, USA}



\begin{abstract}
We present and analyze an approximation scheme for the two-dimensional game $p$-Laplacian in the framework of viscosity solutions. The approximation is based on  a semi-Lagrangian scheme  which exploits the idea of $p$-averages. We study the properties of the scheme and prove that  it converges, in particular cases, to the  viscosity solution of the game $p$-Laplacian. We also present a numerical implementation of the scheme for different values of $p$; the numerical tests show that the scheme is accurate. 
\end{abstract}

\begin{keyword}
$p$--Laplacian \sep Tug of war game \sep Hamilton--Jacobi equations \sep semi-Lagrangian scheme \sep convergence \sep viscosity solutions.
\end{keyword}

 
 \end{frontmatter}

\section{Introduction}

 The game $p$-Laplace operator has been recently introduced in \cite{PS} to model a stochastic game called {\it tug-of-war with noise}. Part of the interest for this class of operators arises from the fact that it includes, as particular cases, the operator in the Aronsson equation \cite{BEJ08}, the infinity Laplacian \cite{PSSW}, the motion by mean curvature operator \cite{KS}, and for $p=2$, a multiple of  the  ordinary Laplacian. For the connections between these operators and differential games see also \cite{E07}.
 
 The equation associated to the game $p$-Laplacian has the same solutions as the variational $p$-Laplacian only in the homogeneous case, and has the advantage in the non-homogeneous case of being a combination of other $p$-Laplacians. In particular, we would like to stress the fact that even for the non-homogeneous case the game $\infty$-Laplacian is the limit as $p\to\infty$ of the game $p$-Laplacian.
 
 Our work strongly relies on  the general philosophy illustrated in the paper of Peres and Sheffield \cite{PS}, which indicates that to study $p$-harmonic functions one can look at discrete versions of the $p$-operators; these correspond to stochastic processes with paths that are nonlinearly averaged, ranging from motion by mean curvature to Brownian motion to diffusions generated by the Arronsson operator. It is important to note that our work is in the framework of weak solutions in the viscosity sense (see \cite{E10} for an introduction and \cite{CIL92} for a guide to viscosity solutions for  second order problems). However, the difference between the analytic case of Peres and Sheffield \cite{PS}, and our approximation scheme is in the fact that we need the value at a fixed point to depend only on a discrete number of values in space. In this respect, our construction starts from an approximation of $p$-averages in order to keep a strong link between the continuous and the discrete operator. Specifically, one of our goals is to prove that our scheme is consistent with the continuous operator proposed by Peres and Sheffield.

Semi-Lagrangian schemes for nonlinear Hamilton-Jacobi equations have been studied and analyzed  by several authors. The starting point is the discrete version of the characteristic method which leads to a discrete Lax-Hopf formula for first order Hamilton-Jacobi equation \cite{FF02}. It is interesting to note that semi-Lagrangian schemes allow for large time steps still satisfying stability conditions. A comprehensive introduction to semi-Lagrangian methods for linear and first order Hamilton-Jacobi equations is contained in the book by Falcone and Ferretti  \cite{FF11}.  For second order problems some results have been obtained for stationary and evolutive equations related to stochastic control problems in \cite{CF95}, whereas a presentation of the treatment of second order terms in SL schemes has been given in \cite{Fe10}. Recent extensions to mean curvature driven flows have been analyzed in \cite{CFF10}.

In the homogeneous case the game $p$-Laplacian coincides with the variational $p$-Laplacian for which several approximation methods have been proposed. Some of these schemes are based on finite elements and show convergence, also establishing a priori error estimates, see e.g. the paper by Barrett and Liu  \cite{BL94}. However, finite elements are not the most popular techniques for nonlinear degenerate equation. Finite difference approximation schemes for degenerate second order equations have been proposed and analyzed by Crandall and Lions in \cite{CL96}, and in several papers by Oberman \cite{O04,O104,O06}.  More recently,  finite volumes schemes have been presented by Andreyanov, Boyer and Hubert in \cite{ABH06}, for the variational $p$-Laplacian.

In the non-homogeneous case the game $p$-Laplacian interprets the non-homogeneity, $f$, as a multiple of a running payoff for one of the players in a two-player, zero-sum game while for the variational $p$-Laplacian the non-homogeneity is interpreted as a potential. From the numerical point of view, the variational $p$-Laplacian is typically studied with homogeneous Dirichlet boundary conditions. In this case, one has  Poincar\'e inequalities when the region and solutions are smooth, which help in the proofs of convergence and error estimates. The game $p$-Laplacian has fewer tools, relying essentially on monotonicity properties and on the notion of viscosity solutions. For this reason we believe that our results will be a useful contribution to the theory.

  The paper is organized as follows. In Section~2 we introduce the formal definition of the game $p$-Laplacian, and give a precise definition of  viscosity solutions for our problem, as well as a brief description of the stochastic game {\it tug-of-war with noise}.  We formulate  our approximation scheme in Section~3, where we also give insights into its construction. The analysis of the properties of the scheme, and a proof of convergence for the homogeneous case when $p\geq 2$ are presented in Section~4. In Section~5 we discuss in detail  the numerical implementation of the scheme on a rectangular grid, and we conclude with some numerical tests in Section~6. For the sake of completeness, we also add in Section~7 a technical appendix on some elementary properties of the $p$-average of finite sets of real numbers.

\section{Game $p$-Laplacian}\label{sec-defin}

The $p$-Laplace operator, which we refer to as {\it the variational} $p$-Laplacian, for $1\leq p<\infty$, is defined by
\begin{equation}\label{p-var} 
\Delta_p \, u :=  \Div \left(|\nabla u|^{p-2} \, \nabla u \right),
\end{equation}
whereas for $p=\infty$,  traditionally  is given by
 \begin{equation*}
 \Delta_\infty u := \displaystyle{  \sum_{i,j} \, \frac{\partial u}{\partial x_i} \,\frac{\partial u}{\partial x_j} \, \frac{\partial^2 u}{\partial x_i \partial x_j}}.
 \end{equation*}

The subject of our numerical study is  the Dirichlet boundary value problem for the so-called game $p$-Laplacian introduced by Peres and Sheffield \cite{PS},  which for $1<p<\infty$ reads as follows:

\begin{equation}\label{p-game}
\left\{\begin{array}{lll}
&-\Delta_p^G u = f\quad& \text{ in } \Omega, \\
& u =\,F\quad &\text{ on } \partial \Omega,
\end{array} \right.
\end{equation}
where
\begin{equation}\label{def-p-game}
\Delta_p^G u := \displaystyle{ \frac 1p\, |\nabla u|^{2-p} \, \Div \left(|\nabla u|^{p-2} \, \nabla u\right)}.
\end{equation}

We require $f$ and $F$ to be continuous in their domain of definition. Additionally, we assume $f$ either identically equal to zero or never zero. In the sequel,  we will then always consider the cases $f\equiv 0$ or $f>0$ ( without loss of generality). Here $\Omega\subset \R^n$ is a bounded smooth domain.

Note that for $p=2$, one has $\Delta_2^G  = \frac 12 \Delta_2$, that is one-half of the Laplacian, which is the infinitesimal generator for a Brownian motion.
 
If $u$ is a smooth function, by expanding the derivatives,  we obtain 
\begin{equation}\label{e2-1}
\Delta_p^G u = \frac 1p \, \Delta_2 \, u + \frac{p-2}p \, |\nabla u|^{-2} \, \sum_{i,j} \, \frac{\partial u}{\partial x_i} \,\frac{\partial u}{\partial x_j} \, \frac{\partial^2 u}{\partial x_i \partial x_j},
\end{equation}
 therefore, by taking the limit for $p\to \infty$, one is naturally lead to the following definition of the game $\infty-$Laplacian:
 \begin{equation}\label{def-infinity-game}
 \Delta_\infty^G u := \displaystyle{ |\nabla u|^{-2} \, \sum_{i,j} \, \frac{\partial u}{\partial x_i} \,\frac{\partial u}{\partial x_j} \, \frac{\partial^2 u}{\partial x_i \partial x_j}}.
 \end{equation}
The game $1$-Laplacian is defined in terms of the Laplacian and the game $\infty-$Laplacian:
\begin{equation}\label{1-game}
\Delta_1^G u :=  \Delta_2 u- \Delta_\infty^G u.
\end{equation}

We would like to point out  that with this notation, the expansion in $(\ref{e2-1})$ allows us to think of $\Delta_p^G$ as the convex combination of the two limiting cases, that is
\begin{equation}\label{e2-2}
\Delta_p^G= \frac 1p \, \Delta_1^G  \, + \,  \frac 1q  \, \Delta_\infty^G,
\end{equation}
with $q$ the conjugate exponent  of $p$ (i.e. $\frac1p+\frac1q=1$).

At the points where $\nabla u \neq 0$, the game 1-Laplacian and the game $\infty$-Laplacian can be thought as the second derivative in the orthogonal direction of $\nabla u$ and in the direction of $\nabla u$, respectively.
That is,
 \begin{equation}\label{1-game-der} 
\Delta_1^G \, u = |\nabla u^\perp|^{-2}  <D^2 u  \, \nabla u^\perp, \nabla u^\perp>,
\end{equation}
and
 \begin{equation}\label{infty-game-der} 
\Delta_\infty^G \, u = |\nabla u|^{-2} <D^2 u \, \nabla u,\nabla u>,
\end{equation}
where $D^2 u$ denotes the Hessian matrix.

The variational $p$-Laplacian can be obtained as the Euler-Lagrange equation of an energy functional,  a fact that does not hold for  the game $p$-Laplacian. Additionally, while the variational $p$-Laplacian is degenerate elliptic for $2<p<\infty$ and singular for $1\leq p <2$,  the game $p$-Laplacian is singular for every $p\neq 2$, so  suitable definitions of viscosity  solutions are needed.

Juutinen  and al. \cite{JLM} have shown that for the variational $p$-Laplacian, when $1<p<\infty$, the notions of viscosity solution and weak solution are equivalent. The interested reader can find in the survey  \cite{CIL92} a number of results on viscosity solutions for second order problems. Note that, in the homogeneous case, i.e. when $f \equiv 0$, the solutions of the two operators agree with each other.

Various definitions of viscosity solutions for the game $p$-Laplacian can be given and are found in the literature. The most suitable for our treatment is the one obtained by following the definition in the  classical paper of Barles and Souganidis \cite{BS91}.

In what follows, we will restrict ourselves to the two-dimensional case, that is we will take $\Omega \subset \R^2$.

 \begin{defi}\label{visco-sol-BS}
Consider a smooth domain $\Omega \subset{\R}^2$, and let $1<p\leq \infty$, $q$ such that $1/p+1/q=1$. If $f$ is a continuous function, we say that an upper semi-continuous function  [respectively, lower semi-continuous] $u:\Omega\rightarrow \R$  is a viscosity subsolution [supersolution] of 
\begin{equation}\label{p-equation}
-\Delta_p^G u (x) = f(x) \mbox{ in } \Omega,
\end{equation}
if for any $\phi \in C^2(\Omega)$ such that $u-\phi$ has a local maximum [local minimum] at $x\in\Omega$, we have
\begin{itemize}
\item[(i)]  $\displaystyle{-\Delta_p^G \phi(x)  \leq f(x)} \quad [\displaystyle{-\Delta_p^G \phi(x)  \geq f(x)}] \quad \mbox{ if } \nabla \phi(x) \neq 0$\ ;
\item[(ii)] if $\lambda_1\leq \lambda_2$ denote the eigenvalues of $D^2\phi(x)$, then:

$\displaystyle{-\frac {\lambda_1}p - \frac {\lambda_2}q \leq f(x)} \quad [\displaystyle{- \frac {\lambda_1}q - \frac {\lambda_2}p\geq f(x)}] \quad   \mbox{if }  \nabla \phi(x) = 0 \mbox{ and }\ p\geq 2$;

$\displaystyle{-\frac {\lambda_1}q - \frac {\lambda_2}p \leq f(x)}  \quad  [\displaystyle{ - \frac {\lambda_1}p - \frac {\lambda_2}q\geq f(x)}]\quad \mbox{if }  \nabla \phi(x) = 0  \mbox{ and }1<p< 2$.
\end{itemize}
\end{defi}
 
 \  \
 
\begin{rem}\label{visc-sol-rem}
Part (ii) of the definition of viscosity subsolution [supersolution] is implied by the condition:
\begin{itemize}
\item[(ii)'] $\displaystyle{-\Delta_2^G \phi(x) \leq f(x)}\quad [\displaystyle{-\Delta_2^G \phi(x) \geq f(x)}] \quad \mbox{whenever } \, \nabla \phi(x) = 0$.
\end{itemize}

This is a consequence of the fact that 
\begin{equation*}
-  \frac {\lambda_1} p - \frac {\lambda_2}q  \leq  -\Delta_2^G   \phi(x) \leq -\frac  {\lambda_1}q - \frac {\lambda_2}p,\ \mbox{ if } \, p\geq 2,
\end{equation*}
and
\begin{equation*}
-  \frac { \lambda_1}q - \frac {\lambda_2}p  \leq  -\Delta_2^G   \phi(x) \leq -\frac {\lambda_1}p - \frac {\lambda_2}q,\ \mbox{ if } \, 1<p<2.
\end{equation*}
\end{rem} 
 
Uniqueness for viscosity solutions of nonlinear operators that are singular at isolated points,  typically does not depend on the particular value one assigns to these points as long as this is chosen in a consistent manner (see for example Section~9 in \cite{CIL92}), additionally our numerical results show numerical convergence to solutions that verify $(ii)'$. Therefore, we will use the following definition for viscosity solution of the game $p$-Laplacian:

 \begin{defi}\label{G-visco-sol}
A function $u$ is a {\em viscosity solution} of (\ref{p-equation}), for $1<p\leq\infty$ if $u$ is a subsolution and a supersolution according to $(i)$ of Definition~\ref{visco-sol-BS} and $(ii)'$ in Remark~\ref{visc-sol-rem}.
\end{defi}

As we said in the introduction, the main ingredient of our approximation is the way we discretize using $p$-averages.
Let us recall the notion of $p$-average of a set of numbers.

 \begin{defi}\label{num-p-ave}
 Given a finite set of real numbers, $S=\{s_1, s_2, .....,s_m\}$, we denote by
$A_p(S)$ the {\em $p$-average} of its elements, that is $A_p(S)$ is such that
\begin{equation}\label{p-ave}
\displaystyle{\sum_{j=1}^m \, \left|s_j-A_p(S)\right|^p =  \min_{c \in {\R}} \sum_{j=1}^m \, |s_j-c|^p} \quad \text{ if } \quad  1< p <\infty,
\end{equation}
\begin{equation}\label{infty-ave}
\displaystyle{A_\infty(S)= \frac12 \, \left[ {\displaystyle{\max_{s_j \in S} s_j +\min_{s_j \in S} s_j}}\right],}
\end{equation}
and
\begin{equation}\label{one-ave}
\displaystyle{A_1(S)=  \text{ median of } S.}
\end{equation}
\end{defi}
Since the median of an even number of points is not uniquely defined, in (\ref{one-ave}) we follow tradition and take it to be the average of the two middle points.

\smallskip Note that by convexity $A_p(S)$ above is unique for $1<p\le \infty$. For $p=2$, $A_2(S)$ is the arithmetic mean of the numbers in the set $S$:
$$
A_2(S)=\frac 1m{\displaystyle{\,\sum_{j=1}^m s_j\,}}.
$$

We end this section with a brief description of the two-player, zero-sum game called {\em tug-of-war with noise}. In this game, we fix a parameter $\epsilon>0$, a $1<p<\infty$, a domain $\Omega$, a continuous running cost $h:\Omega\to \R$ with $h\geq 0$, and a continuous exit cost $F:\partial\Omega\to \R$, as well as an initial position $x=x_0 \in \Omega$. A token is placed at $x_0$, and at each stage, $k$, a fair coin is flipped. The winner of the coin flip picks any direction vector, $v_k$, with $|v_k|\leq\epsilon$, to which a random noise vector $z_k$ is added. The vector $z_k$ is equally likely to be one of the two vectors orthogonal to $v_k$ of length $\displaystyle{\sqrt{\frac 1{p-1}} \, |v_k|}$. Then the token is moved to $x_k=x_{k-1}+v_k+z_k$ and the play continues until the token is within a distance $\displaystyle{\widehat\epsilon=\left(1+\sqrt{\frac 1{p-1}}\right) \epsilon}\,$ from $\partial\Omega$. In that case, the winner picks $x_k \in \partial \Omega$ with $|x_k-x_{k-1}|\leq \widehat\epsilon$. The payoff to one of the players, say player I, from the other player, say player II, is $\displaystyle{F(x_k)+ \epsilon^2 \sum_{i=0}^{k-1}h(x_i)}$. Under various conditions, Peres and Sheffield \cite{PS} show that when both players choose optimal strategies, as $\epsilon \to 0$ there is an expected value $u(x)=u(x_0)$ which solves the boundary value problem for the game $p$-Laplacian equation given in  (\ref{p-game}) with $\displaystyle{f=\frac 2q h}$. Note that as $p\to\infty$ the noise vector disappears, while for $p=2$ it has the same length as the chosen direction vector, resulting in a two-dimensional random walk.

\section{Construction of the approximation scheme}

We arrive at our approximation scheme inspired by the work of Peres and Sheffield \cite{PS}, where the game interpretation of the game $p$-Laplace operator is based on averaging over non-Markovian paths, by asking the question if the $p-$game operators have an averaging characteristic, and if this can be captured by some quantity. Having this in mind, looking at  the numerical approximations of Oberman \cite{O04,O104} of the $1$-Laplacian and the $\infty$-Laplacian, as well as at the standard central difference approximation of the $2$-Laplacian, we notice that they all can be rewritten in terms of their corresponding $p$-average. 

From these observations, we arrive at the conclusion that there should be an inherent averaging characteristic in all the $p$-operators, and that the notion of $p$-average is a possible candidate for the correct quantity describing it. We refer to the work \cite{GS}  for an analytical result on averaging properties of the $p$-Laplacian in terms of a continuous $p$-average (see also \cite{MPR}). 

To stress how the notion of $p$-average comes into play quite naturally, when dealing with approximation schemes for the game $p$-Laplacian,  let's look at some known approximation schemes and rethink them in terms of $p$-averages. To avoid cumbersome notations we present the approximation scheme in $\R^2$ but the extension to the general multidimensional case follows along the same lines.

For $p=2$, using standard central differences, at a point ${\bf x}=(x_1,x_2)$, for $h \in {\R}$ small,  we have
\begin{eqnarray}
&&\Delta_2^G \, u(x_1,x_2) \approx  \frac 1{2 \, h^2} \left[u(x_1+h,x_2)+u(x_1,x_2+h)  \right. \nonumber \\
&& \qquad  \left.+u(x_1-h,x_2)+u(x_1,x_2-h)-4 \, u(x_1,x_2)\right],\nonumber
\end{eqnarray}
which, reordering terms in a suitable way, gives
\begin{equation}
\Delta_2^G  \, u({\bf x}) \approx \frac{2}{ \, h^2} \left[ A_2(C_h( {\bf x},u)) - \, u({\bf x})\right]. \nonumber
\end{equation}
In general, we will denote by  $C_h( {\bf x},u)$ the 
set of values used to compute the approximation at a point ${\bf x}$,  in this case 
$
C_h( {\bf x},u) =\{ u(x_1+h,x_2), u(x_1,x_2+h), u(x_1-h,x_2),u(x_1,x_2-h)\}. 
$

\ \

For $p=\infty$, the scheme in \cite{O04} can be rewritten as
\begin{equation}
\Delta_\infty^G \, u({\bf x}) \approx  \frac{2}{ \, h^2} \left[ A_\infty(C_h( {\bf x},u)) - \, u({\bf x})\right], \nonumber
\end{equation}
where now  $C_h( {\bf x},u)$ is a discrete set of values of $u$ on the sphere of radius $h$ centered at ${\bf x}$, and  the distribution and number of points on the sphere influences the accuracy of the approximation in a fundamental way. 

It is relevant to mention that also for $p=2$ one could pick as $C_h( {\bf x},u)$ a larger set of values of $u$ on the sphere, but since the Laplacian is a linear operator this would not increase the accuracy.  

A similar scheme that uses the $1-$average  can be constructed in the case $p=1$, in view of the interpretation as second directional derivative given by $(\ref{1-game-der})$, see \cite{O104} for the parabolic case.

The generalization to the game $p$-Laplacian of these interpretations using averages suggests the following approximation:
\begin{equation}\label{approxGp}
\Delta_p^G \, u({\bf x}) \approx   \frac{2}{ \, h^2} \left[ A_p(C_h( {\bf x},u)) - \, u({\bf x})\right],
\end{equation}
where again $C_h( {\bf x},u)$ would be a suitable discrete set of values of $u$ on the sphere of radius $h$ centered at ${\bf x}$.

We are then lead to the following approximation scheme for the Dirichlet problem (\ref{p-game}): 
\begin{equation}\label{ellipticscheme}
\hspace{-0.1cm} S(\rho,{\bf x}, u({\bf x}), u) = 0 \, \mbox{ in }  \overline \Omega,
\end{equation}
where the positive discretization parameters are represented by the vector $\rho:=(h,\Delta \theta)$ (with $h$ the spatial step and $\Delta\theta$ the angular resolution), and  $S: [0,1) \times (0,\pi/2] \times \overline{\Omega}\times {\R} \times L^\infty(\overline{\Omega}) \longrightarrow {\R}$ is defined as
\begin{equation}\label{schemeGp}
S(\rho,{\bf x}, u({\bf x}), u) =
\left\{\begin{array}{ll}
\hspace{-0,1cm}-\frac{2}{ \alpha^2 \, h^2} \left[ A_p(C_h^{\Delta \theta}( {\bf x},u;\alpha)) - \, u({\bf x})\right]- f({\bf x}) &\text{in }\Omega, \\ \\
 u({\bf x}) - F({\bf x})& \hspace{-0.3cm}\text{on }\partial\Omega.
\end{array} \right.
\end{equation}
Here, if $d_\Omega<\infty$ denotes the diameter of $\Omega$, $\alpha=\alpha({\bf x})$ is a dilation parameter such that $0<\alpha({\bf x})\le dist({\bf x}, \partial{\Omega})<d_\Omega$. Finally,  $C_h^{\Delta \theta}( {\bf x},u;\alpha)$ is now a suitably chosen discrete set of values of $u$, taken on the sphere of center  ${ \bf x}$ and radius $h \,\alpha$, associated to the angular resolution $\Delta \theta$. 

There is some freedom in how to choose the points in $C_h^{\Delta \theta}( {\bf x},u;\alpha)$, but as in \cite{FT09} we follow a standard discretization of the sphere of radius $h \,\alpha$ centered at ${\bf x}$, and take ${\bf y}_i={\bf x}+ h \,\alpha\, {\bf r}_i$ (with $ \Delta\theta=\pi/(2\,m)$ and ${\bf r}_i = \left(\cos i\Delta\theta, \sin i \Delta\theta\right)$), so that
\begin{equation}\label{stencil1}
C_h^{\Delta \theta}( {\bf x}, u;\alpha )=\{u({\bf y}_i), \, i=0,...,4m-1\}.
\end{equation}
Note that with this choice of $m$, if a direction ${\bf r}_i$ is in the set of admissible directions so is its opposite,  $-{\bf r}_i$, as well as its orthogonal and its reflections with respect to each of the axes. Also, note that with our choice of $\alpha$ we have that ${\bf y}_i \in \overline \Omega$ for every $i$, so our set $C_h^{\Delta \theta}( {\bf x}, u;\alpha )$ is well-defined.

 
\section{Study of convergence}
Let us analyze the convergence of our approximation scheme using the framework provided by the classical result of convergence for fully nonlinear second order elliptic equations of  Barles and Souganidis presented in \cite{BS91}. Assuming that a comparison principle is available for the exact solution, in this approach convergence to viscosity solutions is implied by the monotonicity, stability and consistency of the scheme.

We prove monotonicity for the general case, consistency for $p\geq 2$, and stability for the case $f=0$. Therefore, we have formal convergence of the scheme for the case $p\geq 2$ and $f=0$. Nevertheless, the numerical experiments we run, and which we illustrate in Section \ref{numtest}, show convergence in the general case.

We follow \cite{BS91} and define $G_p: T^2 \times {\R}^2 \times {\R} \times {\bar \Omega} \to {\R}$, where $T^2$ is the set of $2\times 2$ real symmetric matrices, by first introducing the function $H_p:T^2 \times {\R}^2 \times {\Omega} \to {\R}$:
\begin{equation}
H_p(M, {\bf r}, {\bf x}) = \left \{ \begin{array}{ll}
\displaystyle{-\frac 1p  <M \frac{{\bf r^\perp}}{|{\bf r^\perp}|}, \frac{{\bf r^\perp}}{|{\bf r^\perp}|}> - \frac 1q <M \frac{{\bf r}}{|{\bf r}|}, \frac{{\bf r}}{|{\bf r}|}>  - f({\bf x})}  \, &\mbox{ if }  {\bf r} \neq {\bf 0}\\ \\
-\frac 12 {\mbox tr}(M) - f({\bf x})   \,&\mbox{ if } {\bf r} = {\bf 0}, 
\end{array} \right. \nonumber
\end{equation}
and then setting
\begin{equation}\label{Fp}
G_p(M, {\bf r}, u,  {\bf x})= \left \{ \begin{array}{ll}
H_p(M, {\bf r}, {\bf x})  & \mbox{ for } {\bf x} \in \Omega,  \\ \\
u({\bf x}) - F({\bf x})   \,& \mbox{ for } {\bf x} \in \partial \Omega.
\end{array} \right. 
\end{equation}

In this notation, the Dirichlet problem (\ref{p-game}) is expressed as
\begin{equation}\label{Fp-game}
G_p(D^2u,Du,u,{\bf x})=0 \quad \mbox{ for } \, {\bf x}\in\overline\Omega.
\end{equation}

\begin{rem} A viscosity solution of (\ref{Fp-game}) is a function $u$ that verifies  Definition~\ref{G-visco-sol} in $\Omega$ and satisfies the boundary conditions
\begin{eqnarray}
&\max(H_p(D^2u,Du,{\bf x}), u-F) \geq 0\,  \mbox{ on } \partial \Omega, \nonumber \\
&\min(H_p(D^2u,Du,{\bf x}), u-F)  \leq 0\,  \mbox{ on } \partial \Omega. \nonumber
\end{eqnarray}
\end{rem}

The ellipticity of $G_p$ is a trivial consequence of its definition:
\begin{lem}
$G_p$ is elliptic, namely  for all $({\bf r},u,{\bf x}) \in  {\R}^2 \times {\R} \times {\bar \Omega}$ we have
$$
G_p(M, {\bf r}, u, {\bf x}) \leq G_p(N, {\bf r}, u, {\bf x}),
$$
for all $M,N \in T^2$ such that $M-N$ is positive semidefinite.
\end{lem}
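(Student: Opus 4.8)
The plan is to verify the required inequality directly from the definition of $G_p$, treating separately the cases distinguished in the definitions of $H_p$ and $G_p$. If ${\bf x}\in\partial\Omega$, both sides equal $u({\bf x})-F({\bf x})$ regardless of the matrix argument, so the inequality holds trivially with equality. The substance is therefore the interior case ${\bf x}\in\Omega$, where $G_p(M,{\bf r},u,{\bf x})=H_p(M,{\bf r},{\bf x})$, and one must show that $M\mapsto H_p(M,{\bf r},{\bf x})$ is nonincreasing in the sense that $M-N$ positive semidefinite implies $H_p(M,{\bf r},{\bf x})\le H_p(N,{\bf r},{\bf x})$.

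First I would dispose of the case ${\bf r}={\bf 0}$: here $H_p(M,{\bf 0},{\bf x})=-\tfrac12\operatorname{tr}(M)-f({\bf x})$, and since $\operatorname{tr}$ is linear and $\operatorname{tr}(M-N)\ge 0$ whenever $M-N$ is positive semidefinite (the trace being the sum of the nonnegative eigenvalues), we get $-\tfrac12\operatorname{tr}(M)\le -\tfrac12\operatorname{tr}(N)$, hence the claim. For ${\bf r}\ne{\bf 0}$, write ${\bf e}={\bf r}/|{\bf r}|$ and ${\bf e}^\perp={\bf r}^\perp/|{\bf r}^\perp|$, two unit vectors, so that
\begin{equation*}
H_p(M,{\bf r},{\bf x}) = -\tfrac1p\,\langle M{\bf e}^\perp,{\bf e}^\perp\rangle - \tfrac1q\,\langle M{\bf e},{\bf e}\rangle - f({\bf x}).
\end{equation*}
If $M-N\succeq 0$ then $\langle (M-N){\bf v},{\bf v}\rangle\ge 0$ for every vector ${\bf v}$, in particular for ${\bf v}={\bf e}$ and ${\bf v}={\bf e}^\perp$; since $\tfrac1p>0$ and $\tfrac1q>0$, multiplying by these positive coefficients and adding preserves the inequality, giving $\tfrac1p\langle M{\bf e}^\perp,{\bf e}^\perp\rangle + \tfrac1q\langle M{\bf e},{\bf e}\rangle \ge \tfrac1p\langle N{\bf e}^\perp,{\bf e}^\perp\rangle + \tfrac1q\langle N{\bf e},{\bf e}\rangle$, and negating yields $H_p(M,{\bf r},{\bf x})\le H_p(N,{\bf r},{\bf x})$.

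There is essentially no obstacle here: the result is elementary and, as the paper states, a trivial consequence of the definition. The only point requiring a word of care is that $H_p$ is defined piecewise in ${\bf r}$, but since ellipticity is asserted for each \emph{fixed} $({\bf r},u,{\bf x})$, the two cases never interact and can be handled independently as above. One could also note that in all cases $H_p(\cdot,{\bf r},{\bf x})$ is an affine function of $M$ with "negative-semidefinite gradient" in an appropriate sense, which makes monotonicity manifest; I would include at most a sentence to this effect rather than belabor it.
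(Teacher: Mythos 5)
Your proof is correct and is exactly the elementary verification the paper has in mind (the paper simply declares the lemma a trivial consequence of the definition and gives no written proof): boundary case independent of the matrix, trace monotonicity when ${\bf r}={\bf 0}$, and nonnegativity of the quadratic form of $M-N$ at the unit vectors ${\bf e},{\bf e}^\perp$ otherwise. The only microscopic remark is that for $p=\infty$ one has $\tfrac1p=0$ rather than $\tfrac1p>0$, but nonnegativity of the coefficients is all your argument needs, so nothing breaks.
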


Monotonicity of the scheme stems from the monotonicity property of the $p$-average, which we derive, for the convenience of the reader, in Lemma~\ref{p-mono} of the Appendix.
\begin{theo}\label{monotonicity}
Let $u, v \in L^\infty(\overline \Omega)$, if $u(\bf{x}) \ge$ $v(\bf{x})$ in $\overline\Omega$ then for all $p\ge 1$, $\rho \in [0,1)\times(0,\pi/2]$, ${\bf x}\in \overline\Omega$, and $t\in{\R}$ it holds
$$
S(\rho,{\bf x}, t, u)  \leq S(\rho,{\bf x}, t, v). 
$$
\end{theo}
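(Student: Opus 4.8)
The plan is to unwind the definition of the scheme operator $S$ given in $(\ref{schemeGp})$ and reduce the monotonicity claim to the elementary monotonicity of the $p$-average of finite sets of reals. First I would fix $p\ge 1$, $\rho=(h,\Delta\theta)\in[0,1)\times(0,\pi/2]$, ${\bf x}\in\overline\Omega$, and $t\in\R$, and split into the two cases appearing in the definition of $S$.

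On the boundary, ${\bf x}\in\partial\Omega$, the scheme value is $S(\rho,{\bf x},t,u)=t-F({\bf x})$, which does not depend on $u$ at all; hence $S(\rho,{\bf x},t,u)=S(\rho,{\bf x},t,v)$ and the inequality holds trivially with equality. In the interior, ${\bf x}\in\Omega$, we have
\begin{equation*}
S(\rho,{\bf x},t,u)=-\frac{2}{\alpha^2 h^2}\bigl[A_p(C_h^{\Delta\theta}({\bf x},u;\alpha))-t\bigr]-f({\bf x}),
\end{equation*}
so comparing $S(\rho,{\bf x},t,u)$ with $S(\rho,{\bf x},t,v)$ amounts to comparing $A_p(C_h^{\Delta\theta}({\bf x},u;\alpha))$ with $A_p(C_h^{\Delta\theta}({\bf x},v;\alpha))$: since the prefactor $-\tfrac{2}{\alpha^2 h^2}$ is negative, and the $t$ and $f({\bf x})$ terms are identical, $S(\rho,{\bf x},t,u)\le S(\rho,{\bf x},t,v)$ is equivalent to $A_p(C_h^{\Delta\theta}({\bf x},u;\alpha))\ge A_p(C_h^{\Delta\theta}({\bf x},v;\alpha))$.

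Now the key step: by construction $C_h^{\Delta\theta}({\bf x},u;\alpha)=\{u({\bf y}_i):i=0,\dots,4m-1\}$ and $C_h^{\Delta\theta}({\bf x},v;\alpha)=\{v({\bf y}_i):i=0,\dots,4m-1\}$ use the \emph{same} node points ${\bf y}_i={\bf x}+h\alpha\,{\bf r}_i\in\overline\Omega$, and the hypothesis $u\ge v$ on $\overline\Omega$ gives $u({\bf y}_i)\ge v({\bf y}_i)$ for every $i$. Thus the two finite sets are compared entry-by-entry, and the monotonicity property of the $p$-average (Lemma~\ref{p-mono} in the Appendix: if $s_j\ge \tilde s_j$ for all $j$ then $A_p(\{s_j\})\ge A_p(\{\tilde s_j\})$) yields $A_p(C_h^{\Delta\theta}({\bf x},u;\alpha))\ge A_p(C_h^{\Delta\theta}({\bf x},v;\alpha))$. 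Combining the two cases completes the proof. There is no real obstacle here — the only thing to be careful about is that the stencil nodes are identical for $u$ and $v$ (which holds because $\alpha=\alpha({\bf x})$ depends only on ${\bf x}$, not on the function), and that all ${\bf y}_i$ lie in $\overline\Omega$ so that $u({\bf y}_i)$ and $v({\bf y}_i)$ are defined and comparable; both are guaranteed by the construction in Section~3. The substantive content is entirely delegated to the elementary Lemma~\ref{p-mono} on $p$-averages.
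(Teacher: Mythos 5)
Your proposal is correct and follows essentially the same route as the paper's proof: split into the boundary case (where $S$ is independent of the function argument) and the interior case, and reduce the latter to the comparison $A_p(C_h^{\Delta \theta}( {\bf x},u;\alpha)) \geq A_p(C_h^{\Delta \theta}( {\bf x},v;\alpha))$, which follows from $u\geq v$ on $\overline\Omega$ and Lemma~\ref{p-mono}. Your additional remarks that the stencil points are the same for $u$ and $v$ and lie in $\overline\Omega$ are exactly the (implicit) justifications the paper relies on.
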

\begin{proof}
If ${\bf x}\in\partial\Omega$ then $S(\rho,{\bf x}, t, u) = t -F({\bf x}) = S(\rho,{\bf x}, t, v)$ and the claim is clearly true. 
If ${\bf x}\in\Omega$ we have that 
\begin{eqnarray*}
S(\rho,{\bf x}, t, u)=- \frac{2}{ \alpha^2 \, h^2} \left[ A_p(C_h^{\Delta \theta}( {\bf x},u;\alpha)) - \, t\right]- f({\bf x}) \\
\leq - \frac{2}{ \alpha^2 \, h^2} \left[ A_p(C_h^{\Delta \theta}( {\bf x},v;\alpha)) - \, t\right] -f({\bf x}) = S(\rho,{\bf x}, t, v),
\end{eqnarray*}
since $A_p(C_h^{\Delta \theta}( {\bf x},u;\alpha)) \geq A_p(C_h^{\Delta \theta}( {\bf x},v;\alpha)) $, thanks to the assumption
$u\geq v$ in $ \overline \Omega$ and Lemma~\ref{p-mono}. 
\end{proof}

To prove consistency, we start by showing that in the case $p\geq 2$ our approximation has the correct behavior in the internal points of the domain.
\begin{theo}\label{consist}
Let  $p\geq2$. For all   ${\bf x}\in \Omega$ and $\phi\in C^\infty(\overline \Omega)$, we have that 
\begin{eqnarray}
\lim_{(h,\Delta \theta)\to 0}\ \frac 2{\alpha^2\,h^2} \left[A_p(C_h^{\Delta \theta}( {\bf x}, \phi; \alpha)) - \phi({\bf x}) \right]= \left \{ \begin{array}{ll}
 \Delta^G_p \phi({\bf x}) & \mbox{ if }\,  \nabla  \phi({\bf x})  \neq 0, \\ \\
 \Delta^G_2 \phi({\bf x}) & \mbox{ if }\,  \nabla  \phi({\bf x})  = 0.
\end{array} \right. 
\end{eqnarray}
\end{theo}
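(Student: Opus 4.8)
The plan is to Taylor-expand $\phi$ on the stencil, reduce to the case of a second-order polynomial by using the translation invariance and a Lipschitz-type stability of the $p$-average, and then evaluate the $p$-average of the quadratic stencil explicitly, treating separately the directions (nearly) orthogonal to $\nabla\phi({\bf x})$ and passing to a Riemann-sum limit in the angle. Write $\epsilon:=h\,\alpha({\bf x})$ (a fixed positive multiple of $h$, so $\epsilon\to0$ iff $h\to0$) and, for $i=0,\dots,4m-1$ with $m=\pi/(2\Delta\theta)$, ${\bf y}_i={\bf x}+\epsilon{\bf r}_i$, ${\bf r}_i=(\cos i\Delta\theta,\sin i\Delta\theta)$. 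Taylor's formula gives $\phi({\bf y}_i)=\phi({\bf x})+\epsilon\,{\bf r}_i\!\cdot\!\nabla\phi({\bf x})+\tfrac{\epsilon^2}{2}b_i+R_i$, where $b_i:=\langle D^2\phi({\bf x}){\bf r}_i,{\bf r}_i\rangle$ and $|R_i|\le C_\phi\,\epsilon^3$. Since $A_p$ satisfies $A_p(S+c)=A_p(S)+c$ and $|A_p(\{a_i\})-A_p(\{a_i'\})|\le\max_i|a_i-a_i'|$ (elementary facts about $p$-averages, of the same nature as Lemma~\ref{p-mono}), replacing $\phi({\bf y}_i)$ by the polynomial values $t_i:=\epsilon\,{\bf r}_i\!\cdot\!\nabla\phi({\bf x})+\tfrac{\epsilon^2}{2}b_i$ changes $A_p(C_h^{\Delta \theta}( {\bf x},\phi;\alpha))-\phi({\bf x})$ by $O(\epsilon^3)=o(\epsilon^2)$; thus it is enough to show that $\tfrac{2}{\epsilon^2}A_p(\{t_i\})$ has the asserted limit.

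If $\nabla\phi({\bf x})=0$ the conclusion is in fact exact for \emph{every} $m$: then $t_i=\tfrac{\epsilon^2}{2}b_i$, so by homogeneity $\tfrac{2}{\epsilon^2}A_p(\{t_i\})=A_p(\{b_i\})$. The stencil is invariant under rotation by $\pi/2$ (one has ${\bf r}_i^\perp={\bf r}_{i+m}$), and since $\{{\bf r}_i,{\bf r}_i^\perp\}$ is an orthonormal frame, $b_i+b_{i+m}=\operatorname{tr}D^2\phi({\bf x})$; hence the multiset $\{b_i\}$ is symmetric about $\tfrac12\operatorname{tr}D^2\phi({\bf x})$. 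The map $c\mapsto\sum_i|b_i-c|^p$ is then convex and symmetric about that value (and likewise the median and $\tfrac12(\max+\min)$ equal it), so $A_p(\{b_i\})=\tfrac12\operatorname{tr}D^2\phi({\bf x})=\Delta_2^G\phi({\bf x})$.

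Assume now $\nabla\phi({\bf x})\neq0$ and rotate coordinates so that $\nabla\phi({\bf x})=(g,0)$ with $g>0$; in these coordinates $\Delta_\infty^G\phi({\bf x})=\phi_{11}$, $\Delta_1^G\phi({\bf x})=\phi_{22}$, and $t_i=\epsilon\big(g\cos\theta_i+\tfrac{\epsilon}{2}b_i\big)$ with $\theta_i=i\Delta\theta$. Because $\{g\cos\theta_i\}$ is symmetric about $0$ (pairing $i\leftrightarrow i+2m$), $A_p(\{\epsilon g\cos\theta_i\})=0$, and the Lipschitz bound gives $A_p(\{t_i\})=\epsilon^2\gamma$ with $\gamma$ bounded. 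For $2<p<\infty$ the function $\psi_p(s):=|s|^{p-2}s$ is $C^1$, and I would expand the optimality relation $\sum_i\psi_p(t_i-\epsilon^2\gamma)=0$: the two directions with $\cos\theta_i=0$ contribute only $O(\epsilon^{2(p-1)})$; for the remaining ones one writes $t_i-\epsilon^2\gamma=\epsilon g\cos\theta_i(1+u_i)$ with $u_i=O(\epsilon/|\cos\theta_i|)$, expands $\psi_p(1+u_i)=1+(p-1)u_i+O(u_i^2)$, and uses $\sum_i|\cos\theta_i|^{p-2}\cos\theta_i=0$ together with the vanishing of the quadratic-in-$u$ contribution by the same $\theta\mapsto\theta+\pi$ symmetry, to see that the surviving term is of order $\epsilon^p$ and affine in $\gamma$, yielding
\[
\frac{2}{\epsilon^2}A_p(\{t_i\})=\frac{\sum_{i}|\cos\theta_i|^{p-2}b_i}{\sum_{i}|\cos\theta_i|^{p-2}}+o(1),\qquad \epsilon\to0 .
\]
Letting $m\to\infty$ the two sums are Riemann sums (for $p\ge2$ the weight $|\cos\theta|^{p-2}$ is bounded and continuous), so with $b(\theta)=\phi_{11}\cos^2\theta+2\phi_{12}\cos\theta\sin\theta+\phi_{22}\sin^2\theta$ and $I_s:=\int_0^{2\pi}|\cos\theta|^s\,d\theta$ the ratio tends to $\phi_{11}\tfrac{I_p}{I_{p-2}}+\phi_{22}\big(1-\tfrac{I_p}{I_{p-2}}\big)$, the cross term having vanished by oddness. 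The crucial identity is $I_p/I_{p-2}=(p-1)/p=1/q$, an immediate consequence of $\Gamma(x+1)=x\Gamma(x)$; hence the limit equals $\tfrac1q\Delta_\infty^G\phi({\bf x})+\tfrac1p\Delta_1^G\phi({\bf x})=\Delta_p^G\phi({\bf x})$ by $(\ref{e2-2})$. The endpoints $p=2$ and $p=\infty$ follow directly and for every $m$ from the explicit formulas: $A_2$ is the arithmetic mean and $\tfrac1{4m}\sum_i b_i=\tfrac12\operatorname{tr}D^2\phi({\bf x})=\Delta_2^G\phi({\bf x})$ by the rotation symmetry above, while for $A_\infty=\tfrac12(\max+\min)$ the extreme values are attained near $\theta_i=0$ and $\theta_i=\pi$, giving $\phi_{11}=\Delta_\infty^G\phi({\bf x})$.

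The point I expect to demand the most care is that the statement is a \emph{joint} limit in $(h,\Delta\theta)$, so the $o(1)$ above must be made uniform in $m$ as $\epsilon\to0$. The difficulty is the band of $O(\sqrt\epsilon\,m)$ directions with $|\cos\theta_i|\lesssim\sqrt\epsilon$, where the linear part of $t_i$ no longer dominates the quadratic one and the factorization $t_i-\epsilon^2\gamma=\epsilon g\cos\theta_i(1+u_i)$ breaks down; one must split the stencil into a regular and a singular part and show that the singular part perturbs the optimality identity by $O(\epsilon^{(p-2)/2})$ \emph{relative} to the leading $O(\epsilon^p m)$ term — the factors of $m$ cancelling in the ratio, which is what produces uniformity. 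This is exactly where the hypothesis $p\ge2$ enters: for the differentiability of $\psi_p$, for the boundedness of the Riemann weight, and for this relative error to vanish. Everything else reduces to routine Taylor estimates.
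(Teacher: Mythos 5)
Your route is essentially the paper's: Taylor expansion with the $O(\epsilon^3)$ remainder absorbed via the stability and translation properties of the $p$-average (Lemmas~\ref{p-stab} and~\ref{trivial}), rotation so that $\nabla\phi$ points along ${\bf e}_1$, cancellation of the first-order terms through the antipodal pairing $\theta\mapsto\theta+\pi$ (the only symmetry that survives the rotation), the identity $\int\cos^p\,d\theta\big/\int\cos^{p-2}\,d\theta=1/q$, and an identical treatment of the degenerate case $\nabla\phi({\bf x})=0$ via the $\pi/2$-rotation symmetry. The one place where you diverge is also where your write-up has a genuine hole: for $2<p<\infty$ and $\nabla\phi\neq0$ you analyze the critical-point identity $\sum_i|t_i-\epsilon^2\gamma|^{p-2}(t_i-\epsilon^2\gamma)=0$ by factoring $t_i-\epsilon^2\gamma=\epsilon g\cos\theta_i(1+u_i)$ and expanding in $u_i=O(\epsilon/|\cos\theta_i|)$, which is only legitimate away from the band $|\cos\theta_i|\lesssim\sqrt\epsilon$ (of order $m\sqrt\epsilon$ directions). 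Since the theorem is a joint limit in $(h,\Delta\theta)$, everything hinges on controlling this band uniformly in $m$; you state the estimate that would do it (a relative perturbation $O(\epsilon^{(p-2)/2})$ of the optimality identity) but do not prove it, and you also rely, through the phrase ``affine in $\gamma$'', on an unproved uniform-in-$m$ lower bound for the coefficient of $\gamma$ needed to convert a perturbed identity into a bound on the argmin. As written, the decisive step is asserted rather than established.

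It is worth seeing how the paper closes exactly this point with no regular/singular splitting. Working with the derivative $z'(t)$ of the rescaled functional, it uses the exact identity $-p|d+e|^{p-2}(d+e)=-p|d|^{p-2}d-p(p-1)\int_d^{d+e}|s|^{p-2}\,ds$ and the change of variable $s=\cos\theta_j+\alpha h\,u$; the first term vanishes by the antipodal pairing, and in the remaining integral one only needs that, for every $\eta>0$, $|\cos\theta_j+\alpha h\,u|^{p-2}=|\cos\theta_j|^{p-2}+O(\eta)$ once $\alpha h$ is small with $u$ in a fixed compact set --- plain uniform continuity of $s\mapsto|s|^{p-2}$, valid for all $p\geq2$ and requiring no lower bound on $|\cos\theta_j|$, so the nearly orthogonal directions need no special treatment. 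Then, rather than solving the identity asymptotically, the paper checks the sign of $z'$ at $b/q\pm c_0\left(O(\Delta\theta)+O(\eta)\right)$, using the explicit bound $c_1<\Delta\theta\,p(p-1)\sum_j|\cos\theta_j|^{p-2}$ (your missing ``affine coefficient'' bound), and the strict monotonicity of $z'$ about its unique zero (Remark~\ref{r7.1}) to trap the argmin within $O(\Delta\theta)+O(\eta)$ of $b/q$, uniformly in the joint limit. If you either adopt this integral-form linearization or actually carry out the splitting you sketch (including the lower bound on the $\gamma$-coefficient), your argument becomes complete; your handling of $\nabla\phi=0$ and of the endpoints $p=2$ and $p=\infty$ (the latter is dispatched in the paper by citing Oberman) is fine as it stands.
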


\begin{proof} Assume $\nabla \phi({\bf x})  \neq 0 $, and denote by ${\bf e}_1=(1,0)$; without loss of generality, we can assume ${\bf x}=(0,0)$ and $\nabla \phi ({\bf x}) = |\nabla \phi({\bf 0})| \, {\bf e}_1$.  Equation $(\ref{infty-game-der})$ then gives 
\begin{equation}\label{d11}
\Delta^G_\infty \phi ({\bf 0}) = \partial_{11} \phi({\bf 0}),
\end{equation}
while $(\ref{1-game-der})$ yields
\begin{equation}\label{d22}
\Delta^G_1 \phi ({\bf 0}) = \partial_{22} \phi({\bf 0}).
\end{equation}

The $M:= 4 \,m$ points in the set $C_h^{\Delta \theta}( {\bf x}, \phi;\alpha )$ are now on a sphere of center ${\bf 0}$ and radius $\alpha h$, with $0<\alpha<d_\Omega$,  see (\ref{stencil1}),   that is they are given by 
\begin{equation}\label{stencil0}
\xi_j=\alpha h\, (\cos \theta_j, \sin \theta_j) \, \mbox{ for } j=1..M, 
\end{equation}
where the $\theta_j$ are uniformly distributed angles verifying $|\theta_{j+1}-\theta_j|=\Delta \theta$.  We use Taylor's expansion to obtain  
 $$
 \phi(\xi_j)=\phi({\bf 0}) +\nabla \phi ({\bf 0}) \cdot \xi_j + \frac 12 <D^2 \phi({\bf 0}) \xi_j, \xi_j> + o(\alpha^2 \, h^2),
 $$ 
and by Lemmas~\ref{p-stab} and~\ref{trivial} it is enough to show that the $p$-average of  $\displaystyle{\frac 2{\alpha^2\,h^2}\left(\nabla \phi ({\bf 0}) \cdot \xi_j + \frac 12 <D^2 \phi({\bf 0}) \xi_j, \xi_j>\right)}$ tends to $\Delta^G_p \phi({\bf 0})$ as $h$ and $\Delta \theta$ tend to 0. 

If $p=\infty$, consistency is proven as in Oberman \cite{O04}. If $p<\infty$, we employ  (\ref{d11}), (\ref{d22}), (\ref{e2-2}) and the definition of $\xi_j$ to rewrite our elements:
\begin{eqnarray*}
&& \nabla \phi ({\bf 0}) \cdot \xi_j + \frac 12 <D^2 \phi({\bf 0}) \xi_j, \xi_j> \\
&& \qquad  =\alpha \,h \,  \left |\nabla \phi ({\bf 0})\right | \, \cos \theta_j+\alpha^2\, h^2 \, \partial_{12} \,\phi({\bf 0}) \cos \theta_j \, \sin \theta_j \\
&& \qquad + \frac 12 \,\alpha^2\, h^2 \, \Delta^G_\infty \phi ({\bf 0}) \,\cos^2 \theta_j +  \frac 12 \, \alpha^2\, h^2 \, \Delta^G_1 \phi ({\bf 0})\, \sin \theta_j^2\\
&& \qquad  =\alpha \,  h \, \left |\nabla \phi ({\bf 0})\right | \, \cos \theta_j+\alpha^2\, h^2 \, \partial_{12} \,\phi({\bf 0}) \cos \theta_j \, \sin \theta_j \\
&& \qquad + \frac 12\alpha^2\, h^2 \, \left(\Delta^G_\infty \phi ({\bf 0}) - \Delta^G_1 \phi ({\bf 0})\right) \,\cos^2 \theta_j +  \frac 12\alpha^2\, h^2 \, \Delta^G_1 \phi ({\bf 0}) \\
&& \qquad  =\alpha h \, \left |\nabla \phi ({\bf 0})\right | \, \cos \theta_j+\alpha^2\, h^2 \, \partial_{12} \,\phi({\bf 0}) \cos \theta_j \, \sin \theta_j \\
&& \qquad  \, + \frac 12\alpha^2\, h^2 \left(\Delta^G_\infty \phi ({\bf 0}) - \Delta^G_1 \phi ({\bf 0})\right) \left(\cos^2 \theta_j-\frac 1q\right) +  \frac 12\alpha^2\, h^2 \, \Delta^G_p \phi ({\bf 0}).
\end{eqnarray*}

In this way, due to Lemma~\ref{trivial}, we will prove our conclusion if we show that when $h$ and $\Delta \theta$ tend to zero, the $p$-average of
\begin{equation*}
 \alpha \, h \, \cos \theta_j+ \alpha^2\, h^2 \, \frac{\partial_{12} \,\phi({\bf 0}) }{\left |\nabla \phi ({\bf 0})\right | }\,\cos \theta_j \, \sin \theta_j + \alpha^2 \, h^2  \, \frac{\Delta^G_\infty \phi ({\bf 0}) - \Delta^G_1 \phi ({\bf 0})}{2\,\left |\nabla \phi ({\bf 0})\right | } \,\cos^2 \theta_j\ ,
\end{equation*}
times  $\displaystyle{\frac 2{ \alpha^2 \,h^2}}$\ ,  tends to
$\displaystyle{\frac{\Delta^G_\infty \phi ({\bf 0}) - \Delta^G_1 \phi ({\bf 0})}{\left |\nabla \phi ({\bf 0})\right | }\, \frac 1q}$.

By definition of $p$-average, we then need to compute the $\hbox{argmin}$ of the function $Z(t)$:
\begin{eqnarray*}
&&Z(t)=\sum_{j=1..M} \left |\alpha\,h \, \cos \theta_j+ \alpha^2\, h^2 \, \frac{\partial_{12} \,\phi({\bf 0})}{\left |\nabla \phi ({\bf 0})\right | }\, \cos \theta_j \, \sin \theta_j \right . \\
&& \qquad \left . +\ \alpha^2\, h^2  \, \frac{\Delta^G_\infty \phi ({\bf 0}) - \Delta^G_1 \phi ({\bf 0})}{2\,\left |\nabla \phi ({\bf 0})\right | } \,\cos^2 \theta_j  -t \right|^p,
\end{eqnarray*}
but:  $\hbox{argmin}~Z(t)=\alpha^2\, h^2\hbox{argmin}~z(t)$, with 
\begin{eqnarray}
&&z(t)=\Delta \theta \sum_{j=1..M}  \left|\cos \theta_j+\alpha  h \, \frac{\partial_{12} \,\phi({\bf 0})}{\left |\nabla \phi ({\bf 0})\right | }\, \cos \theta_j \, \sin \theta_j  \right.\nonumber \\
&& \qquad  \left. +\ \alpha \,  h \, \frac{\Delta^G_\infty \phi ({\bf 0}) - \Delta^G_1 \phi ({\bf 0})}{2\,\left |\nabla \phi ({\bf 0})\right | } \,\cos^2 \theta_j  -\alpha h\,t  \right|^p.
\end{eqnarray}

We set $a=  \partial_{12} \,\phi({\bf 0}) /{\left |\nabla \phi ({\bf 0})\right | }$ and $b=(\Delta^G_\infty \phi ({\bf 0}) - \Delta^G_1 \phi ({\bf 0}))/(2\,\left |\nabla \phi ({\bf 0})\right |)  $, and  recall equation $(\ref{f-der})$ from the Appendix to derive:
\begin{eqnarray*}
z'(t)= -p \,  \Delta \theta  \, \alpha \, h\,  \sum_{j=1}^{M} [(\cos \theta_j+ \alpha \, h \, a \cos \theta_j \, \sin \theta_j  +\alpha\,  h \ b \,\cos^2 \theta_j  -\alpha \, h\,t )\\  
\cdot | \cos \theta_j+ \alpha \, h \, a \cos \theta_j \, \sin \theta_j +\alpha \, h \, b \,\cos^2 \theta_j  - \alpha \, h\,t  |^{p-2}]\ ;
\end{eqnarray*}
we next use $(\ref{f-der2})$  (which holds in the classical sense if $p\geq 2$ and in the weak sense if $1<p<2$) and  the fundamental theorem of calculus to see that for any $d$ and $e$:
$$
-p \, |d+e|^{p-2} \,  (d+e) = - p \, |d|^{p-2} \, d - p (p-1) \int_d^{d+e} |s|^{p-2} \, ds,
$$
hence
\begin{eqnarray*}
&&z'(t)=  \alpha \, h \, \Delta \theta \sum_{j=1..M}  [-p \,| \cos \theta_j |^{p-2} \,  \cos \theta_j \\
&& - p\, (p-1)  \int_{ \displaystyle{\cos \theta_j} }^{ \displaystyle{\cos \theta_j+ \alpha h \, a \cos \theta_j \, \sin \theta_j  +\alpha h \, b\,\cos^2 \theta_j  -\alpha h\,t }}  |s|^{p-2} ds ].
\end{eqnarray*}
And the change of variable $s=\cos \theta_j + \alpha h \, u$ gives
\begin{eqnarray}\label{zder}
&&z'(t)=  \alpha \, h \, \Delta \theta \sum_{j=1..M}  [-p\,    | \cos \theta_j |^{p-2}  \,  \cos \theta_j \\ 
&& - \alpha \,  h \, p\, (p-1) \int_{ \displaystyle{0}}^{ \displaystyle{a \cos \theta_j \, \sin \theta_j  +b\,\cos^2 \theta_j  -\,t }}  |\cos \theta_j + \alpha h \, u|^{p-2} du ]. \nonumber
\end{eqnarray}

We remarked previously that for our original choice of $m$,  if a direction ${\bf r}_j$ is in the set of admissible directions so is its opposite, its orthogonal, as well as its reflections in each of the axes. After rotating the coordinate system, so that $\nabla \phi ({\bf x}) = |\nabla \phi({\bf 0})| \, {\bf e}_1$, we can not make this claim any longer, as for example we lose the reflections with respect to the axes. Nevertheless, given $\theta_j$, we still have $\theta_j+\pi$, so that the sum of the first term in (\ref{zder}) is zero, and we obtain
\begin{equation}\label{zder-01}
z'(t)=  - \alpha \,  h \, p\, (p-1) \int_{ \displaystyle{0}}^{ \displaystyle{a \cos \theta_j \, \sin \theta_j  +b\,\cos^2 \theta_j  -\,t }} \hspace{-1cm} |\cos \theta_j + \alpha h \, u|^{p-2} du . 
\end{equation}

The $\hbox{argmin}$ of $z(t)$, call it $t_0$, is bounded by a constant independent of $\alpha \, h$.
This can be seen by noticing that  $\alpha \, h \, t_0$ is the $p$-average of the values $\{ \cos \theta_j+ \alpha \, h \, a \cos \theta_j \, \sin \theta_j +\alpha \, h \, b \,\cos^2 \theta_j \}$;  but for our choice of angles the $p$-average of the values $\{ \cos \theta_j\}$ is zero (values are symmetric about zero), hence Lemma~\ref{p-stab} implies $|\alpha\, h \, t_0|< \alpha \, h |a| + \alpha \, h |b|$, that is $t_0<|a|+|b|$.

We would like to show that the  $t_0$ is equal to $\displaystyle{\frac bq}$ up to an order of $O(\Delta\theta)+O(\epsilon)$.
To prove our claim, we set  $C(a,b,q)=2 \, \max\left\{|a|+|b|,\displaystyle{\frac {|b|}q}\right\}$, so that both $t_0$ and $\displaystyle{\frac bq}$ belong to the interval $|t|<C(a,b,q)$, and the upper limit of integration in $(\ref{zder-01})$ verifies $|a \cos \theta_j \, \sin \theta_j  +b\,\cos^2 \theta_j  -\,t |<|a|+|b|+C(a,b,q)$ for $|t|<C(a,b,q)$.

On the other hand, when $p\geq 2$, by uniform continuity if $|u|<|a|+|b|+C(a,b,q)$, for any $\epsilon>0$ there is a $\delta_{\epsilon}:= \delta_\epsilon(a,b,q)$ such that for $0<\alpha h<\delta_{\epsilon}$ it holds $|\cos \theta_j +\alpha \,  h u|^{p-2}= |\cos  \theta_j |^{p-2} + O(\epsilon)$.

Therefore, as long as $|t|<C(a,b,q)$,  for  a fixed $\epsilon>0$, there is a $\delta_{\epsilon}$ such that if $0<\alpha h<\delta_{\epsilon}$, we have
\begin{eqnarray}\label{zder-1}
&&z'(t) = -  \alpha^2 h^2\,\Delta \theta\  p\, (p-1)\ \sum_{j=1..M} \int_{ \displaystyle{0}}^{ \displaystyle{a \cos \theta_j \, \sin \theta_j  +b\,\cos^2 \theta_j  -\,t }}  \hspace{-3cm} \left[|\cos \theta_j|^{p-2} + O(\epsilon)\right] ds  \nonumber \\ 
&&\qquad =  -\alpha^2 h^2 \,  \Delta \theta\  p\, (p-1) \sum_{j=1..M}  \, (a \cos \theta_j \, \sin \theta_j)  |\cos \theta_j|^{p-2}  \nonumber \\
&&\quad\qquad -\alpha^2 h^2 \,  \Delta \theta\  p\, (p-1) \sum_{j=1..M}  \, ( b\,\cos^2 \theta_j  -\,t )  |\cos \theta_j|^{p-2} + \alpha^2 \, h^2 O(\epsilon)\ ;\nonumber 
\end{eqnarray}
here we used the fact that $ \Delta \theta \sum_{j=1..M}  = 2 \pi$.

Given an angle $\theta_j$, as mentioned before, we can not assume we still have the angle $\theta_j + \frac \pi2$ as well; nevertheless, we have among our angles an approximation of it up to order $\Delta \theta$. In other words, given a certain direction, in our pool of directions we also have its reflection up to an error of order $O(\Delta \theta)$,  so that 
\begin{eqnarray}\label{zder-2}
&&z'(t)=  \alpha^2 h^2 \, O(\Delta \theta)+  \alpha^2 h^2\, O(\epsilon)\nonumber \\
&& \  \qquad -\ \alpha^2 h^2 \,  \Delta \theta\ p\, (p-1) \sum_{j=1..M}  \, ( b\,\cos^2 \theta_j  -\,t )  |\cos \theta_j|^{p-2}. 
\end{eqnarray}

To proceed in our proof, we recall the elementary equality
\begin{equation*}
\frac{\,\displaystyle{\int_{-\frac \pi 2}^{\frac \pi 2}}(\cos\theta)^p \, d\theta \,}{\displaystyle{\int_{-\frac \pi 2}^{\frac \pi 2}}(\cos\theta)^{p-2}  \, d\theta }=\frac 1q \quad \mbox{ for any }  1<p<\infty,  \mbox{ and }  \frac 1p + \frac 1q =1,
\end{equation*}
which implies 
\begin{eqnarray}\label{zder-3}
&&z'\left(\frac bq\right)=   \alpha^2 h^2 \, O(\Delta \theta)+  \alpha^2 h^2 \, O(\epsilon),  
\end{eqnarray}
since from it we deduce
$$
 \Delta \theta \sum_{j=1..M} \left(b\,\cos^2 \theta_j  -{\frac b q}\right)  |\cos \theta_j|^{p-2}  = O(\Delta \theta).
$$

Next we notice that  for any $c_0>0$ with $| b/q \pm c_0 (O(\Delta \theta)+  O(\epsilon))|< C(a,b,q)$,  we can use $(\ref{zder-2})$ and $(\ref{zder-3})$ to obtain:
\begin{eqnarray}\label{zder-4}
&&\hspace{-0.5cm}z'\left(\displaystyle{\frac bq }\pm  c_0 (O(\Delta \theta)+  O(\epsilon))\right) \nonumber \\
&&\hspace{-0.5cm} =z'\left(\displaystyle{\frac bq}\right) \pm c_0 (O(\Delta \theta)+  O(\epsilon)) \,  \alpha^2 h^2 \,  \Delta \theta\ p\, (p-1) \sum_{j=1..M}   |\cos \theta_j|^{p-2} \nonumber \\
&&\hspace{-0.5cm} = \alpha^2 h^2\, (O(\Delta \theta)+  O(\epsilon)) \, \left (1\pm c_0  \,  \Delta \theta\, p\, (p-1) \sum_{j=1..M}  |\cos \theta_j|^{p-2}\right). 
\end{eqnarray}

We can also  find  a $\Delta_0$, such that for any $\Delta\theta< \Delta_0$, there is a positive constant $c_1$  independent of $\Delta \theta$ and $\epsilon$, for which
 \begin{equation*}
c_1 <\Delta\theta\ p\,(p-1) \sum_{j=1..M} |\cos \theta_j|^{p-2}.
 \end{equation*}
 
Hence,  since for any $\Delta\theta$ and $\epsilon$ small enough to have $|O(\Delta\theta) + O(\epsilon)|<c_1 \left[C(a,b,q)-\displaystyle{\frac{|b|}q}\right]$, we can pick a $c_0$  for which $\displaystyle{\frac 1{c_1}}<c_0<\displaystyle{\frac2 {c_1}}$ and $| b/q \pm c_0 (O(\Delta \theta)+  O(\epsilon))|< C(a,b,q)$,  from $(\ref{zder-4})$ we obtain
\begin{equation*}\label{zder-5}
z'\left(\displaystyle{\frac bq }-  c_0 \left|O(\Delta \theta)+  O(\epsilon)\right|\right) <0\ ,
 \quad z'\left(\displaystyle{\frac bq }+  c_0 \left|O(\Delta \theta)+  O(\epsilon)\right|\right) >0\ .
\end{equation*}

Recalling that  $t_0=\hbox{argmin } z(t)$ is its only critical point, and $z'(t)>0$ for every $t>t_0$, while $z'(t)<0$  for $t<t_0$, see Remark~{\ref{r7.1}}, we conclude that 
\begin{equation}
\left|\mbox{argmin } z(t) - \frac bq \right| \leq   \frac 2{c_1} \left|O(\Delta \theta)+  O(\epsilon) \right|,
\end{equation}
for every $\Delta\theta$ and $\epsilon$ small enough.

Therefore, we have that for $\epsilon$ and $\Delta\theta$ small enough, there is a $\delta_{\epsilon}$ such that if $0<\alpha h<\delta_{\epsilon}$ then
$$
\left |\frac 2{\alpha^2 \, h^2} \mbox{ argmin } Z(t) -\frac{\Delta^G_\infty \phi ({\bf 0}) - \Delta^G_1 \phi ({\bf 0})}{\left |\nabla \phi ({\bf 0})\right | }\, \frac 1q \right | \leq |O(\Delta \theta)+   O(\epsilon)|, 
$$
and the theorem follows for the case $\nabla \phi({\bf x})  \neq 0. $

Assume $\nabla \phi({\bf x})  = 0 $, then by Definition~\ref{G-visco-sol} we need to show that the
$p$-average of  $\displaystyle{\frac 2{\alpha^2\,h^2}\left( \frac 12 <D^2 \phi({\bf 0}) \xi_j, \xi_j>\right)}$ tends to $\Delta^G_2 \phi({\bf 0})$ as $h$ and $\Delta \theta$ tend to 0.  By our choice of angles,  $\xi_j = \alpha \, h \, (\cos j\Delta \theta, \sin j\Delta \theta)$, so that
\begin{eqnarray}
&& \frac 2{\alpha^2\,h^2}\left( \frac 12 <D^2 \phi({\bf 0}) \xi_j, \xi_j>\right) = \partial_{11}  \phi({\bf 0}) \cos^2( j\Delta\theta) \nonumber \\
&&  \qquad + \partial_{22}  \phi({\bf 0}) \sin^2( j\Delta\theta) + 2 \, \partial_{12}  \phi({\bf 0}) \cos( j\Delta\theta)\, \sin( j\Delta\theta) \nonumber \\
&& \qquad = \frac 12 \Delta_2  \phi({\bf 0}) + \frac 12(\partial_{11}  \phi({\bf 0}) - \partial_{22} \phi({\bf 0})) \cos ( 2 j\Delta\theta)   \nonumber \\
&& \qquad + 2 \, \partial_{12}  \phi({\bf 0}) \cos( j\Delta\theta)\, \sin( j\Delta\theta) \nonumber, 
\end{eqnarray}
but  by our assumptions if $\theta $ is in our selection of angles, so is $\theta + \frac {\pi}2$, thus the $p$-average of 
$$
\frac 12 (\partial_{11}  \phi({\bf 0}) - \partial_{22} \phi({\bf 0})) \cos ( 2 j\Delta\theta)  + 2 \, \partial_{12}  \phi({\bf 0}) \cos( j\Delta\theta)\,  \sin( j\Delta\theta) 
$$
is zero, being a set of symmetric data with respect to 0. Therefore, using (\ref{transla}),
$$
A_p\left[  \frac 2{\alpha^2\,h^2}\left( \frac 12 <D^2 \phi({\bf 0}) \xi_j, \xi_j>\right) \right]= \frac 12 \Delta_2  \phi({\bf 0}) =  \Delta_2^G  \phi({\bf 0}) ,
$$
and the theorem is proven.
\end{proof}

We are now ready to show consistency of our approximation scheme. Before doing so we remark that although the definition of consistency we use is slightly different from the one given in \cite{BS91}, their convergent result applies also for this formulation. 

\begin{theo}\label{consistency}
Let  $p\geq2$. Our approximation scheme is consistent, that is  for all   ${\bf x}\in\overline  \Omega$ and $\phi\in C^\infty(\overline \Omega)$, we have that 
\begin{equation}
 \limsup_{\underset{\underset{\xi \to 0}{{\bf y} \to {\bf x}}}{\rho\to 0}} S(\rho, {\bf y}, \phi({\bf y})+\xi, \phi+\xi) \leq \limsup_{\underset{{\bf y}\in \overline \Omega}{{\bf y} \to {\bf x}}} G_p(D^2\phi({\bf y}),D\phi({\bf y}),\phi({\bf y}),{\bf y})
 \end{equation}
(where, as before,  $\rho=(h,\Delta\theta)$) and
 \begin{equation}
\liminf_{\underset{\underset{\xi \to 0}{{\bf y} \to {\bf x}}}{\rho\to 0}} S(\rho, {\bf y}, \phi({\bf y})+\xi, \phi+\xi) \geq \liminf_{\underset{{\bf y}\in \overline \Omega}{{\bf y} \to {\bf x}}} G_p(D^2\phi({\bf y}),D\phi({\bf y}),\phi({\bf y}),{\bf y}).
\end{equation}

\end{theo}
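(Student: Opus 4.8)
\emph{Overall strategy.} I would deduce both inequalities from Theorem~\ref{consist}, using the translation invariance $(\ref{transla})$ of the $p$-average to eliminate $\xi$, and then a rate analysis near $\{\nabla\phi=0\}$ and near $\partial\Omega$. By $(\ref{transla})$, for $\mathbf{y}\in\Omega$ the scheme value $S(\rho,\mathbf{y},\phi(\mathbf{y})+\xi,\phi+\xi)$ equals $-\frac{2}{\alpha^2h^2}[A_p(C_h^{\Delta\theta}(\mathbf{y},\phi;\alpha))-\phi(\mathbf{y})]-f(\mathbf{y})$ and does not depend on $\xi$, while for $\mathbf{y}\in\partial\Omega$ it equals $\phi(\mathbf{y})+\xi-F(\mathbf{y})$. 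Theorem~\ref{consist} then shows that, for each fixed $\mathbf{y}\in\overline\Omega$, $\lim_{\rho\to0}S(\rho,\mathbf{y},\phi(\mathbf{y})+\xi,\phi+\xi)$ equals $-\Delta_p^G\phi(\mathbf{y})-f(\mathbf{y})=H_p(D^2\phi(\mathbf{y}),D\phi(\mathbf{y}),\mathbf{y})$ when $\nabla\phi(\mathbf{y})\neq0$, equals $-\Delta_2^G\phi(\mathbf{y})-f(\mathbf{y})$ — which is exactly $G_p(D^2\phi(\mathbf{y}),\mathbf{0},\phi(\mathbf{y}),\mathbf{y})$ by Definition~\ref{G-visco-sol} — when $\nabla\phi(\mathbf{y})=0$, and equals $\phi(\mathbf{y})-F(\mathbf{y})$ when $\mathbf{y}\in\partial\Omega$; in every case the limit is $G_p(D^2\phi(\mathbf{y}),D\phi(\mathbf{y}),\phi(\mathbf{y}),\mathbf{y})$.

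\emph{The easy cases.} If $\nabla\phi(\mathbf{x})\neq0$ then $\nabla\phi$ is bounded away from $0$ near $\mathbf{x}$, the estimates inside the proof of Theorem~\ref{consist} are uniform there (they involve $\phi$ only through quantities that are continuous and non-degenerate), and $G_p$ is continuous at the relevant argument; so the joint limit on the left exists and both inequalities hold with equality. Approaches $\mathbf{y}\to\mathbf{x}$ through $\partial\Omega$ are likewise immediate. This leaves $\mathbf{x}\in\Omega$ with $\nabla\phi(\mathbf{x})=0$, and $\mathbf{x}\in\partial\Omega$ approached through $\Omega$.

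\emph{The remaining cases, by subsequences.} Take $(\rho_n,\mathbf{y}_n,\xi_n)\to(0,\mathbf{x},0)$ realizing the $\limsup$ (resp.\ $\liminf$) on the left. The indices with $\mathbf{y}_n\in\{\nabla\phi=0\}$ or $\mathbf{y}_n\in\partial\Omega$ contribute, by the previous step, only the values $G_p(D^2\phi(\mathbf{y}_n),D\phi(\mathbf{y}_n),\phi(\mathbf{y}_n),\mathbf{y}_n)$, hence quantities dominated by the right-hand side; so we may assume $\nabla\phi(\mathbf{y}_n)\neq0$ and, along a subsequence, $D\phi(\mathbf{y}_n)/|\nabla\phi(\mathbf{y}_n)|\to e$ and $\beta_n:=|\nabla\phi(\mathbf{y}_n)|/(\alpha(\mathbf{y}_n)h_n)\to\beta\in[0,\infty]$. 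Rerunning the Taylor expansion of the proof of Theorem~\ref{consist} at the moving centre $\mathbf{y}_n$ — absorbing the $o(\alpha^2h^2)$ remainder with Lemma~\ref{p-stab}, using the homogeneity of $A_p$ and the vanishing of the $p$-average of $\{\cos\theta_i\}$ over the symmetric set of $4m$ directions — one checks that the scheme value tends to $-V(\beta,e,D^2\phi(\mathbf{x}))-f(\mathbf{x})$, where $V(\beta,e,M)$ is the $p$-average (in the limit $\Delta\theta\to0$) of $\theta\mapsto2\beta\cos(\theta-\theta_e)+\langle M(\cos\theta,\sin\theta),(\cos\theta,\sin\theta)\rangle$. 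Its endpoints are $V(0,e,M)=\tfrac12\,\mathrm{tr}\,M$ (symmetric data) and, via the identity $\int(\cos\theta)^p\,d\theta/\int(\cos\theta)^{p-2}\,d\theta=1/q$ already used above, $V(\infty,e,M)=\tfrac1p\langle Me^\perp,e^\perp\rangle+\tfrac1q\langle Me,e\rangle=:\Delta_p^G[M,e]$. Now $-\Delta_p^G[D^2\phi(\mathbf{x}),e]-f(\mathbf{x})$ lies in $\limsup_{\mathbf{y}\to\mathbf{x}}G_p$ since $e$ is a limiting gradient direction at $\mathbf{x}$, and $-\Delta_2^G\phi(\mathbf{x})-f(\mathbf{x})$ lies below $\limsup_{\mathbf{y}\to\mathbf{x}}G_p$ as well (it is $G_p$ at $\mathbf{x}$ itself when $\mathbf{x}\in\Omega$, and when $\mathbf{x}\in\partial\Omega$ one uses that, for $p\ge2$, i.e.\ $\tfrac1p\le\tfrac1q$, the half-circle of directions $v$ with $\langle D^2\phi(\mathbf{x})v,v\rangle\ge\langle D^2\phi(\mathbf{x})v^\perp,v^\perp\rangle$ — for which $\Delta_p^G[D^2\phi(\mathbf{x}),v]\le\tfrac12\,\mathrm{tr}\,D^2\phi(\mathbf{x})$ — necessarily meets the $\pi$-arc of gradient directions available from inside $\Omega$). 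Moreover, in the boundary case the constraint $\alpha(\mathbf{y})\le\mathrm{dist}(\mathbf{y},\partial\Omega)$ forces $\beta_n$ to be at least a constant multiple of $1/h_n$, hence $\beta=\infty$, except in degenerate sub-cases where $D^2\phi(\mathbf{x})$ is singular along the approach (then $\tfrac12\,\mathrm{tr}$ and $\Delta_p^G[\cdot,e]$ coincide, or the matrix vanishes and everything equals $-f(\mathbf{x})$).

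\emph{The main obstacle.} Both inequalities thus reduce to the interpolation claim: for $p\ge2$, every unit vector $e$, every symmetric $2\times2$ matrix $M$, and every $\beta\in[0,\infty]$,
\[
\min\bigl(\tfrac12\,\mathrm{tr}\,M,\ \Delta_p^G[M,e]\bigr)\ \le\ V(\beta,e,M)\ \le\ \max\bigl(\tfrac12\,\mathrm{tr}\,M,\ \Delta_p^G[M,e]\bigr),
\]
i.e.\ the tilted $p$-average stays between its two extreme values as the linear tilt $\beta$ grows. I expect this to be the hard part. I would prove it by differentiating $V$ in $\beta$ through the implicit function theorem applied to the Euler equation $\int|u(\theta)-c|^{p-2}(u(\theta)-c)\,d\theta=0$ defining $c=V$, which yields $\partial_\beta V=2\int|u-c|^{p-2}\cos(\theta-\theta_e)\,d\theta\,/\int|u-c|^{p-2}\,d\theta$, and showing this quantity keeps a fixed sign along the homotopy (equivalently, by comparing the two Euler equations directly); this is a one-dimensional fact about $p$-averages in the spirit of the continuous $p$-averaging analysis of \cite{GS}. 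Granting the claim, each limit value $-V(\beta,e,D^2\phi(\mathbf{x}))-f(\mathbf{x})$ produced by the scheme lies between $-\Delta_2^G\phi(\mathbf{x})-f(\mathbf{x})$ and $-\Delta_p^G[D^2\phi(\mathbf{x}),e]-f(\mathbf{x})$, hence between $\liminf_{\mathbf{y}\to\mathbf{x}}G_p$ and $\limsup_{\mathbf{y}\to\mathbf{x}}G_p$; taking the supremum (resp.\ infimum) over all subsequences gives the two required inequalities and establishes consistency for $p\ge2$.
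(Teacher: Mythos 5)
Your proposal takes a genuinely different and much heavier route than the paper, and as written it has a real gap. The paper proves Theorem~\ref{consistency} in a few lines: for ${\bf x}\in\Omega$ it simply invokes Theorem~\ref{consist}, and for ${\bf x}\in\partial\Omega$ it notes that the right-hand sides equal $\max$ (resp.\ $\min$) of $H_p(D^2\phi({\bf x}),D\phi({\bf x}),{\bf x})$ and $\phi({\bf x})-F({\bf x})$, which dominate (resp.\ are dominated by) the two possible forms of $S$; no analysis of the joint limit ${\bf y}\to{\bf x}$, $\rho\to0$ at intermediate scales is attempted. You instead try to control that joint limit honestly, blowing up along subsequences with the parameter $\beta_n=|\nabla\phi({\bf y}_n)|/(\alpha({\bf y}_n)h_n)$ and a limiting tilted $p$-average $V(\beta,e,M)$. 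That is a legitimate and more ambitious strategy, but it is not carried to completion.

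The gap sits exactly where you flag it: the interpolation claim $\min\bigl(\frac12\,\mathrm{tr}\,M,\ \Delta_p^G[M,e]\bigr)\le V(\beta,e,M)\le\max\bigl(\frac12\,\mathrm{tr}\,M,\ \Delta_p^G[M,e]\bigr)$ for all $\beta\in[0,\infty]$ is never proved. You only propose to show that $\partial_\beta V$ ``keeps a fixed sign along the homotopy,'' but no argument is given and none is obvious: when $M$ has an off-diagonal component relative to $e$ the integrand $|u-c|^{p-2}\cos(\theta-\theta_e)$ has no evident symmetry, and in the degenerate case where the two endpoint values coincide your claim forces $V$ to be exactly constant in $\beta$, which needs a symmetry argument your sketch does not supply. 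Since every delicate case of your proof (interior points with $\nabla\phi({\bf x})=0$, boundary points approached from inside) is funneled through this lemma, the argument is incomplete at its crux. Two supporting steps are also asserted rather than proved: (i) that the scheme values along arbitrary sequences converge to $-V(\beta,e,D^2\phi({\bf x}))-f({\bf x})$ — Theorem~\ref{consist} is established for a fixed centre with $\nabla\phi\neq0$, i.e.\ effectively in the regime $\beta=\infty$, so the intermediate-$\beta$ convergence (uniformity in the moving centre ${\bf y}_n$ together with the discrete-to-continuum limit of the $p$-average as $\Delta\theta\to0$) requires its own proof; and (ii) the boundary-point claim that the favourable ``half-circle'' of directions ``necessarily meets the $\pi$-arc of gradient directions available from inside'' — the set of limiting normalized gradients at a boundary critical point need not be an arc at all (it can be a single direction), so that step does not stand as stated. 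In short: an interesting attempt at a finer result, but the key lemma and these two reductions are missing, whereas the paper's intended proof needs none of this machinery.
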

\begin{proof}
If ${\bf x} \in \Omega$ both statements are a consequence of our previous theorem, i.e. Theorem~\ref{consist}.
On the other hand, if ${\bf x} \in \partial \Omega$, we have that 
\begin{eqnarray*}
&& \limsup_{\underset{{\bf y}\in \overline \Omega}{{\bf y} \to {\bf x}}} G_p(D^2\phi({\bf y}),D\phi({\bf y}),\phi({\bf y}),{\bf y}) \\
&& \qquad = \max\left(H_p(D^2\phi({\bf x}),D\phi({\bf x}),{\bf x}), \phi({\bf x})-F({\bf x})\right),
 \end{eqnarray*}
 while  
 \begin{eqnarray*}
&& \liminf_{\underset{{\bf y}\in \overline \Omega}{{\bf y} \to {\bf x}}} G_p(D^2\phi({\bf y}),D\phi({\bf y}),\phi({\bf y}),{\bf y}) \\
&& \qquad = \min\left(H_p(D^2\phi({\bf x}),D\phi({\bf x}),{\bf x}), \phi({\bf x})-F({\bf x})\right),
 \end{eqnarray*}
and the theorem follows by the definition of $S$.
\end{proof}

However, as far as an explicit scheme in time is applied  to 
\begin{equation}
u_t+G_p(D^2u({\bf y}),Du({\bf y}),u({\bf y}),{\bf y})=0,
\end{equation}
the consistency of the scheme $S$ with respect to  the stationary nonlinear operator $G_p$  implies the consistency for the evolutive operator as in \cite{BS91}.\\
In fact, by applying the Euler approximation in time we have, for a given initial condition $u^0$, the explicit time marching scheme
\begin{equation}\label{evolGp}
u^{n+1}=u^n-\Delta t \ S(\rho, {\bf y}, u^n({\bf y}), u^n),
\end{equation}
which implies, taking $\Delta t=|\rho|$,
\[
 \frac{u^{n+1}-u^{n}}{|\rho|} = -S(\rho, {\bf y}, u^n({\bf y}), u^n).
\]
Passing to the limit for $|\rho|$ which tends to 0, we get consistency in the usual sense.

\begin{theo}\label{stability}
Let $f=0$. For all $h>0, \Delta \theta >0$, there exists a solution $u_\rho \in L^\infty(\overline\Omega)$ of (\ref{ellipticscheme}) such that $\displaystyle{||u_\rho||_{L^\infty(\overline\Omega)}\leq ||F||_{L^\infty(\partial\Omega)}}$.
\end{theo}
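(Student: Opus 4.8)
The plan is to read the scheme, for $f\equiv 0$, as a fixed-point problem and solve it by a monotone iteration. Since $S(\rho,\mathbf{x},\cdot,u)$ is affine and strictly increasing in its third argument and, in $\Omega$, vanishes exactly at $A_p\bigl(C_h^{\Delta\theta}(\mathbf{x},u;\alpha)\bigr)$ when $f=0$, equation (\ref{ellipticscheme}) is equivalent to
\begin{equation}\label{fp-stab}
u(\mathbf{x})=A_p\bigl(C_h^{\Delta\theta}(\mathbf{x},u;\alpha)\bigr)\ \text{ in }\Omega,\qquad u=F\ \text{ on }\partial\Omega .
\end{equation}
Set $M:=\|F\|_{L^\infty(\partial\Omega)}$ and
$$
B:=\bigl\{v\in L^\infty(\overline\Omega):\ \|v\|_{L^\infty(\overline\Omega)}\le M,\ v|_{\partial\Omega}=F\bigr\},
$$
and let $Tv$ denote the right-hand side of (\ref{fp-stab}). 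First I would check $T(B)\subseteq B$: on $\partial\Omega$, $Tv=F$; for $\mathbf{x}\in\Omega$, $Tv(\mathbf{x})$ is the $p$-average of finitely many values of $v$, so it lies between their minimum and maximum (an elementary property of the $p$-average recorded in the Appendix, a consequence of Lemma~\ref{p-stab} and (\ref{transla})), whence $|Tv(\mathbf{x})|\le\|v\|_{L^\infty(\overline\Omega)}\le M$. Borel measurability of $Tv$ is inherited from that of $v$, since each stencil map $\mathbf{x}\mapsto\mathbf{x}+h\alpha(\mathbf{x})\mathbf{r}_i$ is continuous (e.g.\ for $\alpha=\mathrm{dist}(\cdot,\partial\Omega)$) and $A_p$ depends continuously on its finitely many entries; so $Tv\in B$.

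Next I would iterate from below. Take $v_0\equiv -M$ on $\Omega$ and $v_0=F$ on $\partial\Omega$; then $v_0\in B$ and, since every value entering $Tv_0(\mathbf{x})$ is $\ge -M$, we get $v_0\le Tv_0$. Because the $p$-average is monotone in its data (Lemma~\ref{p-mono}, which is precisely what underlies Theorem~\ref{monotonicity}), $T$ is order preserving, so $v_n:=T^n v_0$ is nondecreasing and stays in $B$; hence it converges pointwise on $\overline\Omega$ to some $u$ with $\|u\|_{L^\infty(\overline\Omega)}\le M$ and $u|_{\partial\Omega}=F$, and $u$ is a bounded Borel function, i.e.\ $u\in L^\infty(\overline\Omega)$. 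The last step is to pass to the limit in (\ref{fp-stab}): for fixed $\mathbf{x}\in\Omega$ the stencil points $\mathbf{y}_i=\mathbf{x}+h\alpha(\mathbf{x})\mathbf{r}_i$ are the same for all $n$, $v_n(\mathbf{y}_i)\to u(\mathbf{y}_i)$, and $A_p$ is (Lipschitz) continuous in the $4m$-tuple of its arguments (Lemma~\ref{p-stab}), so $Tv_n(\mathbf{x})\to Tu(\mathbf{x})$; but $Tv_n(\mathbf{x})=v_{n+1}(\mathbf{x})\to u(\mathbf{x})$, hence $u=Tu$ in $\Omega$, which with $u|_{\partial\Omega}=F$ gives $S(\rho,\mathbf{x},u(\mathbf{x}),u)=0$ on $\overline\Omega$ together with $\|u_\rho\|_{L^\infty(\overline\Omega)}\le\|F\|_{L^\infty(\partial\Omega)}$.

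I do not expect a serious obstacle here; the two points that need care are that the iterates remain in $B$ (which reduces to the $\min\le A_p\le\max$ bound) and the passage to the limit inside the nonlinear $p$-average (which reduces to the Lipschitz dependence of $A_p$ on its data), both elementary facts established in the Appendix. As an alternative to the explicit iteration, one could instead invoke the Knaster--Tarski fixed-point theorem for the order-preserving map $T$ on the complete lattice $B$ (ordered pointwise), which directly yields a fixed point of $T$ in $B$.
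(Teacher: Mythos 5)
Your argument is correct, and it establishes exactly the statement claimed, but it takes a genuinely different route from the paper. Both proofs recast the scheme (for $f\equiv0$) as a fixed point problem for the same operator (the paper's $E_\rho$, your $T$): $u=A_p\bigl(C_h^{\Delta\theta}(\mathbf{x},u;\alpha)\bigr)$ in $\Omega$, $u=F$ on $\partial\Omega$, and both use Lemma~\ref{trivial} to see that the closed ball of radius $\|F\|_{L^\infty(\partial\Omega)}$ is invariant. The paper then observes, via Lemma~\ref{p-stab}, that $E_\rho$ is nonexpansive in the $L^\infty$ norm and invokes Soardi's fixed point theorem for nonexpansive mappings in certain Banach lattices \cite{So79}; this is short but rests on a nontrivial abstract ingredient (the usual nonexpansive fixed-point theory for uniformly convex spaces does not apply to $L^\infty$, hence the specific citation). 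You instead exploit order preservation of $T$ (Lemma~\ref{p-mono}), start from the subsolution equal to $-\|F\|_{L^\infty(\partial\Omega)}$ in $\Omega$ and $F$ on $\partial\Omega$, and run a monotone iteration; passing to the limit pointwise is legitimate because at each fixed $\mathbf{x}$ the stencil is a fixed finite set and $A_p$ is Lipschitz in its entries (Lemma~\ref{p-stab} again). This is elementary and constructive, produces a solution as the increasing limit of explicit iterates, and is in effect the stationary analogue of the paper's Theorem~\ref{increa} for the discrete implementation; your Knaster--Tarski alternative is equally valid. The only point to flag is measurability: your reduction to Borel measurability requires the dilation function $\alpha(\cdot)$ (and hence the stencil maps) to be Borel, which the paper does not state explicitly---but the paper's own proof needs $E_\rho$ to map $L^\infty(\overline\Omega)$ into itself and glosses over the same issue, so this is a shared, harmless caveat rather than a gap in your argument.
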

\begin{proof}
We consider the operator $E_\rho:L^\infty(\overline \Omega))\to L^\infty(\overline \Omega))$ defined as
\begin{equation*}
E_\rho(u)({\bf x})= \left \{ \begin{array}{ll}
 A_p(C_h^{\Delta \theta}( {\bf x},u;\alpha))  \, &\mbox{ if }  {\bf x} \in \Omega\\ \\
F({\bf x})  \,&\mbox{ if } {\bf x} \in \partial \Omega, 
\end{array} \right. \nonumber
\end{equation*}
and notice that thanks to Lemma~\ref{trivial} in the Appendix, we have
\begin{equation}\label{soardi}
||E_\rho u||_{L^\infty(\overline\Omega)}\leq \max(||u||_{L^\infty(\overline\Omega)},||F||_{L^\infty(\partial\Omega)}).
\end{equation}
Additionally,  $E_\rho$ is a nonexpansive operator in the $L^\infty$ norm, that is
\begin{equation}\label{nonexp}
||E_\rho(u) - E_\rho(v)||_{L^\infty(\overline\Omega)}\leq || u-v||_{L^\infty(\overline\Omega)},
\end{equation}
since if ${\bf x} \in \Omega$, by Lemma~\ref{p-stab} we know that
\begin{equation*}
|E_\rho(u)({\bf x}) - E_\rho(v)({\bf x})|= 
 |A_p(C_h^{\Delta \theta}( {\bf x},u;\alpha))-A_p(C_h^{\Delta \theta}( {\bf x},v;\alpha))| \leq || u-v||_{L^\infty(\overline\Omega)},  
\end{equation*}
while
$$
|E_\rho(u)({\bf x}) - E_\rho(v)({\bf x})|= 0, \, \mbox{ if } {\bf x} \in \partial \Omega.
$$

If we consider $B(0,||F||_{L^\infty(\partial\Omega)})$ to be the sphere centered at  the zero function  and of radius $||F||_{L^\infty(\partial\Omega)}$, equations (\ref{soardi}) and (\ref{nonexp}) imply that $E_\rho$ is a nonexpansive operator mapping the closed sphere $\overline{B(0,||F||_{L^\infty(\partial\Omega)})} \subset L^\infty(\overline\Omega)$ into itself. Therefore, by a classical fixed point theorem result (see Corollary~1 and Remark thereafter in \cite{So79}) we conclude that $E_\rho$ has a fixed point in $\overline{B(0,||F||_{L^\infty(\partial\Omega)})}$, and the theorem follows.
\end{proof}

\begin{theo}\label{conve}
Assume $f\equiv 0$ and $p\geq 2$, then the solution $u_\rho$ of the appro\-ximation scheme~(\ref{ellipticscheme}) converges  as $\rho=(h,\Delta \theta) \to (0,0)$ to the viscosity solution $u$ of (\ref{Fp-game}).
\end{theo}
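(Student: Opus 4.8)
\noindent\emph{Proof strategy.} The plan is to invoke the abstract convergence theorem of Barles and Souganidis~\cite{BS91}. Monotonicity has been established in Theorem~\ref{monotonicity}, stability (for $f\equiv0$) in Theorem~\ref{stability}, and consistency (for $p\ge2$) in Theorem~\ref{consistency}, so it remains to run the half-relaxed limits argument and to close it with a comparison principle for~(\ref{Fp-game}). First I would fix, for every $\rho=(h,\Delta\theta)$, a bounded solution $u_\rho$ of~(\ref{ellipticscheme}) provided by Theorem~\ref{stability}, so that $\|u_\rho\|_{L^\infty(\overline\Omega)}\le\|F\|_{L^\infty(\partial\Omega)}$ uniformly in $\rho$. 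Since $\{u_\rho\}$ is uniformly bounded, the half-relaxed limits
\[
\overline{u}(\mathbf{x}):=\limsup_{(\rho,\mathbf{y})\to(0,\mathbf{x})}u_\rho(\mathbf{y}),\qquad \underline{u}(\mathbf{x}):=\liminf_{(\rho,\mathbf{y})\to(0,\mathbf{x})}u_\rho(\mathbf{y})
\]
are well defined and bounded on $\overline\Omega$, with $\overline{u}$ upper semicontinuous, $\underline{u}$ lower semicontinuous and $\underline{u}\le\overline{u}$.

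Next I would carry out the standard Barles--Souganidis reasoning to show that $\overline{u}$ is a viscosity subsolution and $\underline{u}$ a viscosity supersolution of~(\ref{Fp-game}) in the sense of Definition~\ref{G-visco-sol}, together with the generalized boundary conditions of the Remark following~(\ref{Fp-game}). For the subsolution property one takes $\phi\in C^\infty(\overline\Omega)$ with $\overline{u}-\phi$ having a strict local maximum at $\mathbf{x}_0\in\overline\Omega$, adds a constant to $\phi$ so that $\phi(\mathbf{x}_0)=\overline{u}(\mathbf{x}_0)$, and extracts --- by the usual compactness argument for relaxed limits, allowing an $o(h^2)$ error to cope with the fact that $u_\rho$ need not attain its supremum --- sequences $\rho_n\to0$, $\mathbf{x}_n\to\mathbf{x}_0$ with $u_{\rho_n}-\phi$ (nearly) maximal at $\mathbf{x}_n$ and $u_{\rho_n}(\mathbf{x}_n)\to\overline{u}(\mathbf{x}_0)$. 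Setting $\xi_n:=u_{\rho_n}(\mathbf{x}_n)-\phi(\mathbf{x}_n)\to0$, one has $\phi+\xi_n\ge u_{\rho_n}$ near $\mathbf{x}_n$, so by monotonicity (Theorem~\ref{monotonicity}, used at the interior point $\mathbf{x}_n$, where the scheme sees $\phi+\xi_n$ only through stencil values, which lie in this neighborhood once $h$ is small)
\[
S\bigl(\rho_n,\mathbf{x}_n,\phi(\mathbf{x}_n)+\xi_n,\phi+\xi_n\bigr)\ \le\ S\bigl(\rho_n,\mathbf{x}_n,u_{\rho_n}(\mathbf{x}_n),u_{\rho_n}\bigr)=0 .
\]
Sending $n\to\infty$ and using the lower consistency inequality of Theorem~\ref{consistency} passes this to the limit as the subsolution requirement at $\mathbf{x}_0$: when $\mathbf{x}_0\in\Omega$ this is exactly Theorem~\ref{consist} (at a point where $\nabla\phi(\mathbf{x}_0)=0$ the scheme returns $\Delta_2^G\phi(\mathbf{x}_0)$, which is precisely what condition~$(ii)'$ of Remark~\ref{visc-sol-rem} asks), whereas when $\mathbf{x}_0\in\partial\Omega$ it is the first of the two generalized boundary inequalities. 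The supersolution property of $\underline{u}$ follows by the symmetric argument, using minima and the upper consistency inequality.

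Finally I would apply the comparison principle for~(\ref{Fp-game}): $\overline{u}$ being a bounded upper semicontinuous subsolution and $\underline{u}$ a bounded lower semicontinuous supersolution, comparison forces $\overline{u}\le\underline{u}$ on $\overline\Omega$, and with the trivial inequality $\underline{u}\le\overline{u}$ this gives $\overline{u}=\underline{u}=:u$. Being at once upper and lower semicontinuous, $u\in C(\overline\Omega)$, so $u$ is \emph{the} viscosity solution of~(\ref{Fp-game}) (comparison also yielding uniqueness, while its existence is produced by this very limit), and the coincidence of the two relaxed limits is equivalent to the locally uniform convergence $u_\rho\to u$ on $\overline\Omega$ claimed in the statement.

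The step I expect to be the main obstacle is having a comparison principle at one's disposal in a form strong enough to handle the generalized (viscosity) boundary condition: this is delicate precisely because $\Delta_p^G$ is singular for $p\ne2$, and it is where the smoothness of $\Omega$ is used, through barriers forcing sub- and supersolutions to assume the datum $F$ on $\partial\Omega$; following~\cite{BS91} we regard this comparison principle as a standing hypothesis on the exact problem. A subsidiary point is that $G_p$ is neither upper nor lower semicontinuous on the singular set $\{\nabla\phi=0\}$, where the value selected in Definition~\ref{G-visco-sol} lies strictly between its two one-sided limits; as recalled in Section~\ref{sec-defin} this choice does not affect comparison, and the form of consistency proved in Theorem~\ref{consistency}, phrased through the one-sided limits of $G_p$, is exactly what the Barles--Souganidis framework requires.
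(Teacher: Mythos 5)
Your proposal takes essentially the same route as the paper: the paper's proof is exactly an appeal to Theorem~2.1 of \cite{BS91}, combining the monotonicity, consistency and stability results (Theorems~\ref{monotonicity}, \ref{consistency}, \ref{stability}), and your unpacking of the half-relaxed-limits argument is just the inside of that citation. The only substantive difference is the comparison principle, which you leave as a standing hypothesis; the paper closes this point by noting that for $f\equiv 0$ existence, uniqueness and comparison for (\ref{Fp-game}) are known from \cite{M,JLM} (since in the homogeneous case the game and variational $p$-Laplacians have the same solutions), so you should cite these rather than assume comparison, in order for the theorem to be unconditional.
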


\begin{proof} \, For $f=0$, we know that (\ref{Fp-game})  has a unique bounded viscosity solution $u$, with $\displaystyle{||u||_{L^\infty(\overline\Omega)}\leq ||F||_{L^\infty(\partial\Omega)}}$ and that a comparison principle holds (see \cite{M,JLM} for details).   Therefore, thanks to Theorems~\ref{monotonicity}, \ref{consistency} and \ref{stability}, we can apply Theorem~2.1 in \cite{BS91}.
\end{proof}

\section{Numerical implementation}\label{parabolic}

Let us now introduce in $\Omega$ a structured grid according to a space discretization parameter $h$, with $0<h<1$, and denote by $\{{\bf x}_j\}_{j=1..N}$ its nodes. 
An important step to  implement our approximation scheme for a fixed angular resolution $\Delta \theta$ is the way we reconstruct the values $C_h^{\Delta \theta}( {\bf x}, u;\alpha )$ in   \eqref{approxGp}, starting from the known values of $u$ at the nodes of the grid. This is done via interpolation, but in order to obtain a convergence result we must restrict to monotone interpolation techniques. This means that denoted by $u$ a function defined in a domain $D$ and by $I[u](\cdot)$ its local interpolation based on the values at the nodes,  we can only use an interpolation operator such that
\begin{equation}
m=\min_{x\in D} u(x) \le I[u](x)\le \max_{x\in D} u(x)=M.
\end{equation}
Note that linear and bilinear interpolation in $\R^2$ are monotone interpolation. Another important property of bilinear interpolation is that it is translation invariant, i.e. given a constant $\delta$ we have 
\begin{equation}
I[u+\delta](x) = I[u]+\delta.
\end{equation}
This property will guarantee that the resulting scheme is also invariant with respect to the addition of constants.

We implement our approximation scheme on a structured grid using the classical time marching approximation \eqref{evolGp}. Since our solution $u^n$ is characterized by its values at the nodes, we prefer to work in the space ${\R}^N$, denoting by ${\bf u}^n$ the iterate at step $n$, that is the vector of components $u^n_j=u^n({\bf x}_j)$; given an initial condition ${\bf u}^0\in{\R}^N$ we then consider the iterative scheme
\begin{equation}\label{scheme}
\hspace{-0.17cm} u^{n+1}_j \hspace{-0.1cm}= \hspace{-0.1cm} \left \{ \begin{array}{ll}
\hspace{-0.20cm} u^{n}_j +\displaystyle{ \frac{2\, \Delta t}{ \alpha_j^2 \, h^2}} \left[ A_p({\widehat C}_h^{\Delta \theta}( {\bf x}_j,{\bf u}^{n};\alpha_j )) - \, u^{n}_j\right]+\Delta t \ f({\bf x}_j)  &\mbox{} {\bf x}_j\in \Omega,  \\ \\
\hspace{-0.20cm} F({\bf x}_j) &\mbox{}{\bf x}_j\in \partial\Omega;
\end{array} \right.
\end{equation}
if for a given integer $m$ we assume $\Delta\theta=\pi/(2\,m)$, now
\begin{equation}\label{stencil}
{\widehat C}_h^{\Delta \theta}( {\bf x}_j, {\bf u}^n;\alpha_j )=\{w({\bf x}_j^i,u^n), \, i=0,...,4m-1\},
\end{equation}
 where ${\bf x}_j^i={\bf x}_j + h \,\alpha_j\, {\bf r}_i$ (with ${\bf r}_i = \left(\cos i\Delta\theta, \sin i \Delta\theta\right)$), and 
$w({\bf x}_j^i,u^n)$ is the value  of the function $u^n$ at ${\bf x}_j^i$ obtained by using bilinear interpolation on the four closest grid-points (in fact this is the main difference with respect to $C_h^{\Delta \theta}$). Here, $\alpha_j$ is a parameter that may vary at each grid point ${\bf x}_j$, and which verifies $0<\alpha_*<\alpha_j <dist({\bf x}_j, \partial \Omega)<d_\Omega$, for some constant $\alpha_*$ and  for $d_\Omega$, the diameter of $\Omega$.  In other words, $\alpha_j$ is a function of $j$, which is uniformly bounded independently of $h$ from above and below, and  such that ${\bf x}_j^i$ belongs to the computational domain whenever $0<h<1$.

More precisely, if ${\bf x}=(x,y)$ is a point contained in a grid cell with vertices $(x_k,y_l)$ (the lower left corner), $(x_{k+1},y_l), (x_k,y_{l+1})$ and $(x_{k+1},y_{l+1})$, and we denote by $u_{k,l}=u(x_k,y_l)$, etc., the known values of a function $u$ on them, the bilinear interpolation computes in $(x,y)$ the second order polynomial
\begin{equation*}   
I[u](x,y)=axy+bx+cy+d  ,
\end{equation*}
where the coefficients $a$, $b$, $c$ and $d$ can be determined
solving the linear system  $4\times 4$ which corresponds to the four height
conditions at the four vertices of the cell.  Those values can also be written as linear combinations of
the values of $u$ at the vertices of the cell, i.e.
\begin{equation*} 
I[u](x,y)=\lambda_{k,l}u_{k,l}+\lambda_{k+1,l}u_{k+1,l}+\lambda_{k,l+1}u_{k,l+1} 
         +\lambda_{k+1,l+1}u_{k+1,l+1},  
\end{equation*}
where the coefficients  are given by 
\begin{equation*}  
\lambda_{k,l}=(x_{k+1}-x)(y_{l+1}-y)/V , 
\end{equation*}
\begin{equation*}  
\lambda_{k+1,l}=(x-x_{k})(y_{l+1}-y)/V  ,
\end{equation*}
\begin{equation*}  
\lambda_{k,l+1}=(x_{k+1}-x)(y-y_l)/V  ,
\end{equation*}
\begin{equation*}  
\lambda_{k+1,l+1}=(x-x_k)(y-y_l)/V  ,
\end{equation*}
and $V:=(x_{k+1}-x_k)(y_{l+1}-y_l)$ is the area of the cell, i.e. $V=h^2$ for our uniform grid (see for example \cite{FT09}). 

The $p$-average in (\ref{scheme}) is then computed through the $4m$ values of $I[u^n]$ on a set of equally distributed points on the sphere of center ${\bf x}_j$ and radius $h\alpha_j$.




%

Then the  approximate solution of the Dirichlet problem $(\ref{p-game})$ for the game $p$-Laplacian is  computed  by using the scheme above and running it  until the stopping rule 
\begin{equation}\label{stop}
E_n = \max_j |u_j^{n+1}-u_j^n|\le \epsilon
\end{equation}
is satisfied, for a given tolerance $\epsilon$. Another option is to use the simpler iterative scheme obtained by setting $\displaystyle{ \frac{2\, \Delta t}{ \alpha_j^2 \, h^2}=1}$ in $(\ref{scheme})$, that is 
\begin{equation}\label{scheme-ellip}
u^{n+1}_j \hspace{-0.1cm}= \hspace{-0.1cm}\left \{ \begin{array}{ll}
\hspace{-0.2cm} A_p({\widehat C}_h^{\Delta \theta}( {\bf x}_j,{\bf u}^{n};\alpha_j )) +\Delta t \, f({\bf x}_j)  &\mbox{ if } {\bf x}_j\in \Omega,  \\ \\ \\
\hspace{-0.2cm} F({\bf x}_j) &\mbox{ if }{\bf x}_j\in \partial\Omega;
\end{array} \right.
\end{equation}
until convergence (i.e. until \eqref{stop} is satisfied).
\ \

We next show that for a suitable initial configuration the iteration generated by (\ref{scheme}) converges if $f\equiv0$. Although we can not claim  that this proves convergence to the solution of our approximation scheme,  the numerical tests that we present in Section~\ref{numtest} show convergence to the correct viscosity solution in all cases were the exact solution is known, also for $f\neq 0$. Let us rewrite our numerical iteration as 
\begin{equation}\label{Srho}
{\bf u}^{n+1}=T_\rho({\bf u}^n),
\end{equation}
where  for  ${\bf u}\in{\R}^N$, we have set
\begin{eqnarray}\label{vscheme}
(T_\rho({\bf u}))_j=\left \{ \begin{array}{ll}
\hspace{-0.2cm}u_j +\displaystyle{ \frac{2\, \Delta t}{\alpha_j^2\, h^2}} \left[ A_p({\widehat C}_h^{\Delta \theta}( {\bf x}_j, {\bf u};\alpha_j )) - \, u_j\right] +\Delta t \, f({\bf x}_j), &{\bf x}_j \in \Omega,\\ \\ 
\hspace{-0.2cm}F({\bf x}_j ) &{\bf x}_j \in \partial \Omega.
\end{array} \right.
\end{eqnarray}

It is not difficult to show that for a given grid, the fact that the boundary nodes have a fixed value over each iteration prevents the numerical solution from blowing up even in the presence of the source term $f$. On the other hand, for $f \neq 0$ the bound depends on the grid size and $f$ jointly, in a manner for which we don't have an independent bound. Instead, if  $f\equiv 0$ it is very easy to derive that the initial condition ${\bf u}^0$ will provide a bound for any subsequent iteration as the following result shows.

\begin{theo}\label{stab}
Let $\displaystyle{\frac{2\, \Delta t}{ \alpha_*^2\, h^2}\leq 1}$. Assume $f\equiv 0$, then for $n\geq1 $ it holds
$$
\sup_{j=1..N} |u_j^{n}| \leq  \sup_{j=1..N} |u_j^{n-1}| \leq \sup_{j=1..N} |u_j^{0}|,
$$
where ${\bf u}^n$ is defined by (\ref{Srho}).
\end{theo}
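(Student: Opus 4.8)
The plan is to read each interior step of the iteration (\ref{vscheme}) as a convex combination and then to control the $p$-average by the current sup-norm of the iterate. The hypothesis $\frac{2\Delta t}{\alpha_*^2 h^2}\leq 1$ together with $\alpha_*<\alpha_j$ gives $0<\lambda_j:=\frac{2\Delta t}{\alpha_j^2 h^2}\leq 1$, so that, since $f\equiv 0$, the interior rule in (\ref{vscheme}) can be rewritten as
\begin{equation*}
u_j^{n}=(1-\lambda_j)\,u_j^{n-1}+\lambda_j\,A_p\bigl({\widehat C}_h^{\Delta \theta}({\bf x}_j,{\bf u}^{n-1};\alpha_j)\bigr),\qquad {\bf x}_j\in\Omega,
\end{equation*}
a genuine convex combination of $u_j^{n-1}$ and a $p$-average of interpolated nodal values.

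Set $M_{n}:=\sup_{k=1..N}|u_k^{n}|$. The elements of ${\widehat C}_h^{\Delta \theta}({\bf x}_j,{\bf u}^{n-1};\alpha_j)$ are the bilinear interpolants $w({\bf x}_j^i,u^{n-1})=I[u^{n-1}]({\bf x}_j^i)$; these are well defined because the constraint $0<\alpha_*<\alpha_j<dist({\bf x}_j,\partial\Omega)$ (with $0<h<1$) keeps each stencil point ${\bf x}_j^i={\bf x}_j+h\alpha_j{\bf r}_i$ inside the computational grid, and by monotonicity of bilinear interpolation each such value lies between the minimum and the maximum of the four surrounding nodal values of ${\bf u}^{n-1}$, hence has modulus at most $M_{n-1}$. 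The elementary bound $\min S\leq A_p(S)\leq\max S$ from Lemma~\ref{trivial} then yields $|A_p({\widehat C}_h^{\Delta \theta}({\bf x}_j,{\bf u}^{n-1};\alpha_j))|\leq M_{n-1}$, and the triangle inequality applied to the convex combination above gives $|u_j^{n}|\leq(1-\lambda_j)M_{n-1}+\lambda_j M_{n-1}=M_{n-1}$ for every interior node. For a boundary node ${\bf x}_j\in\partial\Omega$ the rule (\ref{vscheme}) freezes $u_j^{n}=F({\bf x}_j)$ for all $n\geq 1$; since $u_j^{n-1}=F({\bf x}_j)$ already for $n\geq 2$, and since for $n=1$ the chosen initial datum ${\bf u}^0$ carries the prescribed boundary values (so that $|F({\bf x}_j)|\leq M_0$), in either case $|u_j^{n}|\leq M_{n-1}$. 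Combining the interior and boundary estimates gives $M_{n}\leq M_{n-1}$ for every $n\geq 1$, and iterating this inequality produces the stated chain $\sup_j|u_j^{n}|\leq\sup_j|u_j^{n-1}|\leq\sup_j|u_j^{0}|$.

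The step needing the most care is the estimate $|A_p({\widehat C}_h^{\Delta \theta}(\cdot))|\leq M_{n-1}$: it is where one must use simultaneously that the off-grid stencil points stay inside the grid (a consequence of the bounds imposed on $\alpha_j$), that bilinear interpolation is monotone, and the min--max property of $p$-averages. Everything else is bookkeeping built on the convex-combination structure, which is precisely what the CFL-type assumption $\frac{2\Delta t}{\alpha_*^2 h^2}\leq 1$ provides; the only genuine subtlety is the boundary term at the first iteration, which is the reason the result is formulated for a suitable, boundary-compatible initial configuration.
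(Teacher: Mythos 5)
Your argument is correct and follows essentially the same route as the paper's proof: rewrite the interior update as a convex combination (justified by the CFL-type hypothesis and $\alpha_*<\alpha_j$), bound the $p$-average by the previous sup-norm via monotone bilinear interpolation and the min--max property of Lemma~\ref{trivial}, and conclude with the triangle inequality, the boundary nodes being fixed. Your only addition is the explicit remark about boundary compatibility of ${\bf u}^0$ at the first step, which the paper leaves implicit.
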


\begin{proof} It is enough to look at the internal nodes, since the approximation at the boundary nodes has fixed values. For $j$ fixed, the set  ${\widehat C}_h^{\Delta \theta}( {\bf x}_j, {\bf u}^n;\alpha_j )$ consists of values $w({\bf x}_j^i,u^n)$ computed by using bilinear interpolation  of $u^n$. Hence, each $w({\bf x}_j^i,u^n)$ is controlled by the values $u^n_k$, and  Lemma~\ref{trivial} in the Appendix implies that 
\begin{eqnarray*}
&&|u_j^{n}| =\left|(T_\rho({\bf u}^{n-1}))_j \right| \\
&&\quad =\left|\left(1- \frac{2\, \Delta t}{\alpha_j^2\, h^2}\right) u_j^{n-1} + \frac{2\, \Delta t}{\alpha_j^2\, h^2} A_p({\widehat C}_h^{\Delta \theta}( {\bf x}_j, {\bf u}^{n-1};\alpha_j ))\right|\\
&&\quad  \leq \left(1- \frac{2\, \Delta t}{\alpha_j^2 \, h^2}\right) | u^{n-1}_j|+ \frac{2\, \Delta t}{\alpha_j^2\,h^2}  \sup_{j..N} |u_j^{n-1}|,
\end{eqnarray*}
and stability follows.
\end{proof}

The next theorem shows that if the initial condition is appropriately chosen, then the iterates generated by our schemes are point-wise increasing; this fact together with the previous stability result implies that they pointwise converge if $f\equiv0$.

\begin{theo}\label{increa}
Let $\displaystyle{\frac{2\, \Delta t}{ \alpha_*^2\, h^2}\leq 1}$. For $n\geq1$,  there exists an initial condition, ${\bf u}^0$, for which the iterations generated by the scheme~(\ref{Srho}) verify 
$$
u^n_j\geq u^{n-1}_j  \mbox{ for any }  j=1..N, \mbox{ and } n\geq 1.
$$
\end{theo}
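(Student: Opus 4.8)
The plan is to exhibit an explicit initial condition and show by induction that the scheme is monotone increasing in $n$ from that starting point. A natural candidate is the solution of the discrete problem with the right-hand side $f$ dropped and with all interior values set as low as possible: concretely, take $u^0_j = F({\bf x}_j)$ at the boundary nodes and $u^0_j = m_0 := \min_{j}\{F({\bf x}_j)\}$ at all interior nodes (or more generally any constant $\leq \min_j F({\bf x}_j)$; since $f\equiv 0$ here the additive term $\Delta t\,f({\bf x}_j)$ in $(\ref{vscheme})$ vanishes, so we do not need to worry about it). The point of this choice is that it forces $u^1_j \geq u^0_j$ at the first step: at an interior node, $u^1_j$ is a convex combination of $u^0_j = m_0$ and $A_p({\widehat C}_h^{\Delta\theta}({\bf x}_j,{\bf u}^0;\alpha_j))$, and the latter $p$-average is taken over bilinearly interpolated values of $u^0$, each of which lies between $m_0$ and $\max_j |u^0_j|$ by Lemma~\ref{trivial}; in particular each such value is $\geq m_0$, hence by the monotonicity/localization property of the $p$-average (Lemma~\ref{p-mono} together with Lemma~\ref{trivial}) the $p$-average is $\geq m_0 = u^0_j$, so $u^1_j \geq u^0_j$. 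At boundary nodes $u^1_j = F({\bf x}_j) = u^0_j$ trivially.

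Next I would run the induction. Suppose $u^{n}_j \geq u^{n-1}_j$ for all $j$. I want $u^{n+1}_j \geq u^{n}_j$. At boundary nodes both sides equal $F({\bf x}_j)$, so there is nothing to prove. At an interior node, write $u^{n+1}_j = \left(1-\frac{2\Delta t}{\alpha_j^2 h^2}\right)u^n_j + \frac{2\Delta t}{\alpha_j^2 h^2}A_p({\widehat C}_h^{\Delta\theta}({\bf x}_j,{\bf u}^{n};\alpha_j))$ and similarly for $u^{n}_j$ in terms of ${\bf u}^{n-1}$. The hypothesis $\frac{2\Delta t}{\alpha_*^2 h^2}\leq 1$ (hence $\frac{2\Delta t}{\alpha_j^2 h^2}\leq 1$, since $\alpha_j\geq\alpha_*$) guarantees the coefficient $1-\frac{2\Delta t}{\alpha_j^2 h^2}$ is nonnegative, so $T_\rho$ is built from nonnegative-coefficient combinations of the coordinates of ${\bf u}$ together with the monotone operation $A_p\circ(\text{bilinear interpolation})$. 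Since bilinear interpolation has nonnegative weights and the $p$-average is monotone in its arguments (Lemma~\ref{p-mono}), the map $T_\rho$ is order-preserving: ${\bf u}^{n}\geq {\bf u}^{n-1}$ componentwise implies $T_\rho({\bf u}^{n})\geq T_\rho({\bf u}^{n-1})$ componentwise, i.e. $u^{n+1}_j = (T_\rho({\bf u}^n))_j \geq (T_\rho({\bf u}^{n-1}))_j = u^n_j$. Combined with the base case $u^1\geq u^0$, this closes the induction.

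The two steps that need the most care are (a) verifying the base case, i.e. that the particular ${\bf u}^0$ really does satisfy $u^1\geq u^0$ — this is where the choice of a sufficiently small constant interior value, below all boundary data, is essential, and it rests on the localization bound of Lemma~\ref{trivial} for the $p$-average of interpolated values; and (b) the monotonicity (order-preservation) of $T_\rho$, which is where the constraint $\frac{2\Delta t}{\alpha_*^2 h^2}\leq 1$ is used, exactly so that the "$u_j$ minus something" structure of $(\ref{vscheme})$ does not destroy monotonicity. I expect (a), not (b), to be the main obstacle to state cleanly: one must be slightly careful that "bilinearly interpolated values of ${\bf u}^0$ are all $\geq m_0$" — this follows because bilinear interpolation on a cell returns a value between the min and max of the four corner values, all of which are either $F$-values ($\geq m_0$ by definition of $m_0$) or the interior constant $m_0$, hence all $\geq m_0$. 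Once this is in hand, everything else is a routine monotonicity argument, and I would also remark at the end that Theorems~\ref{stab} and~\ref{increa} together give pointwise convergence of ${\bf u}^n$ as $n\to\infty$ when $f\equiv 0$, since the sequence is nondecreasing and bounded above by $\sup_j|u^0_j|$.
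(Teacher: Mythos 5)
Your proof is correct and follows essentially the same route as the paper's: the same choice of initial condition (interior nodes set to the minimum of the boundary data), the same base case via the localization bound of Lemma~\ref{trivial}, and the same induction via order-preservation of $T_\rho$ under the condition $\frac{2\Delta t}{\alpha_*^2 h^2}\leq 1$ together with Lemma~\ref{p-mono}. The one discrepancy is that you discard the source term by assuming $f\equiv 0$, whereas the theorem is stated under the paper's standing assumption $f\geq 0$ (either $f\equiv 0$ or $f>0$); keeping $\Delta t\, f({\bf x}_j)$ costs nothing, since in the base case it is nonnegative and only reinforces $u^1_j\geq u^0_j$, and in the induction step it appears identically on both sides and cancels.
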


\begin{proof} We choose as initial condition:
\begin{equation}
u^0_j= \left \{ \begin{array}{ll}
\displaystyle{\min_{\partial \Omega}} \, F  \, & \mbox{ if } {\bf x}_j \in \Omega,  \\ \\
F({\bf x}_j) &  \mbox{ if } {\bf x}_j \in \partial \Omega,  
\end{array} \right. \nonumber
\end{equation}
and since $u^0_j\geq\displaystyle{\min_{\partial \Omega}} \, \, F$ for any $j$ we have
$$
A_p({\widehat C}_h^{\Delta \theta}( {\bf x}_j, {\bf u}^0;\alpha_j )) \geq \displaystyle{\min_{\partial \Omega}} \,  F \quad \mbox{ if }  {\bf x}_j \in \Omega.
$$
Therefore, 
\begin{eqnarray*}
&&u^{1}_j = \left(1-\displaystyle{ \frac{2\, \Delta t}{\alpha_j^2\, h^2}} \right) u_j^0+\displaystyle{ \frac{2\, \Delta t}{\alpha_j^2 \, h^2}} A_p({\widehat C}_h^{\Delta \theta}( {\bf x}_j, {\bf u}^0;\alpha_j )) + \Delta t \,  f(x_j) \\
&&\qquad \geq \displaystyle{\min_{\partial \Omega}} \,  F + \Delta t \, f(x_j) \geq u^0_j, \qquad  \mbox{ if }  {\bf x}_j \in \Omega,
\end{eqnarray*}
since $f\geq 0.$ Then, we conclude $u^1_j\geq u^0_j$ for every $j$, as well as
$$
A_p({\widehat C}_h^{\Delta \theta}( {\bf x}_j, {\bf u}^1;\alpha_j )) \geq A_p({\widehat C}_h^{\Delta \theta}( {\bf x}_j, {\bf u}^0;\alpha_j )), \mbox{ if }  {\bf x}_j \in \Omega,
$$
thanks to Lemma~\ref{p-mono}. Inserting this last inequality in the definition of $u^2_j$ leads to
\begin{eqnarray*}
&&u^2_j=  \left(1-\displaystyle{ \frac{2\, \Delta t}{\alpha_j^2\, h^2}} \right) u_j^1 +\displaystyle{ \frac{2\, \Delta t}{\alpha_j ^2\, h^2}} A_p({\widehat C}_h^{\Delta \theta}( {\bf x}_j, {\bf u}^1;\alpha_j ))  +\Delta t \, f(x_j) \\
&&\ \quad \geq  \left(1-\displaystyle{ \frac{2\, \Delta t}{\alpha_j^2\, h^2}} \right) u_j^0 +\displaystyle{ \frac{2\, \Delta t}{\alpha_j^2 \, h^2}} A_p({\widehat C}_h^{\Delta \theta}( {\bf x}_j, {\bf u}^0; \alpha_j))+  \Delta t \, f(x_j)  \\
&&\ \quad = u^1_j, \qquad \mbox{ if }  {\bf x}_j \in \Omega,
\end{eqnarray*}
that is  $u^2_j\geq u^1_j$ for every $j$. We then obtain the desired conclusion by induction.
\end{proof}

\begin{rem}
In the case $f\equiv 0$, if we pick as initial condition
\begin{equation*}
u^0_j= \left \{ \begin{array}{ll}
\displaystyle{\max_{\partial \Omega}} \, F  \, & \mbox{ if } {\bf x}_j \in \Omega,  \\ \\
F({\bf x}_j) &  \mbox{ if } {\bf x}_j \in \partial \Omega,  
\end{array} \right. \nonumber
\end{equation*}
and follow the steps of the proof of Theorem~\ref{increa}, we obtain a non-increasing sequence, which dominates element by element the sequence in Theorem~\ref{increa}. Note that, if these two sequences converge to the same limit $\overline u$, choosing as initial iteration a ${\bf u}^0$ such that $u^0_j$ is between the minimum and maximum values of $F$,  we would have a sequence converging again to $\overline u$.  Since for $f\equiv 0$ we know that problem (\ref{p-game}) has a unique viscosity solution, if we could show that our numerical implementation converges to it then any initial condition such that $\min_{\partial \Omega} \, F \leq  u^0_j \leq \max_{\partial \Omega} \, F$ would produce a sequence converging to the viscosity solution.
Let us also observe that the choice of a monotone interpolation in the numerical implementation guarantees that similar bounds also apply to the interpolation of our initial condition and to all the elements in the sequence.
\end{rem}
\section{Numerical tests}\label{numtest}

We present in this section some experiments obtained with our numerical implementation coded in MATLAB, and executed on a  MacBook Pro desktop machine with a 2.2 GHz Intel Core 2 Duo processor. As described in Section~5, we have used structured uniform grids, and the values of the approximate solution at the points in ${\widehat C}_h^{\Delta\theta}$ of (\ref{stencil}) have been computed via bilinear interpolation using the four closest grid points. If one of the points lies on a line joining two grid points, the bilinear interpolation reduces to the linear interpolation between them. To compute the $p$-average at each node we used the Newton Bracketing method for minimization of convex functions, \cite{LB}, applied to the function $\displaystyle{g(s):=\left( Q(s,S)\right)^{\frac 1p}}$, where $Q(s,S)$ is defined as in equation (\ref{f-average}). We believe that an optimization of this part of the procedure could improve the speed of calculations. 

Depending on the examples we have picked different values for the parameter $\alpha_j$ at different grid points. To better illustrate our choices, we introduce the following simple definition, where $d_j$ denotes the distance of ${\bf x_j}$ from $\partial \Omega$ along the grid lines, and therefore $\frac {d_j}h$ is an integer greater or equal to 1 (see Figure~1).

\begin{defi}
An iteration generated by (\ref{scheme}) is called  {\em n-level circles iteration} if for every $j$ the parameter $\alpha_j$ is chosen so that $\alpha_j=\beta \min(n,\frac {d_j}h)$, for a given $0<\beta\leq 1$. \end{defi}

\begin{figure}
\begin{center}
\resizebox{3in}{!}{\includegraphics{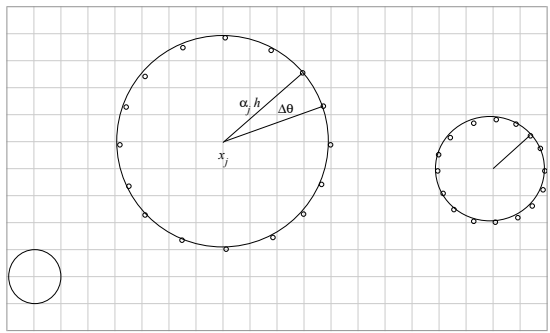}}
\end{center}
\caption{The 4-level circles stencil, with $\beta=1$ }
\end{figure} 

\medskip{\bf Test 1.} For comparison, first of all we run the schemes of the previous section on the examples for the $\infty$-Laplacian presented in \cite{O04}, using the same specifications provided there (in this case convergence is assumed to be reached when $E_n\le 2h10^{-2}$, where $E_n$ is the quantity defined in (\ref{stop})). 

We have applied scheme $(\ref{scheme})$ to the case $\Omega=(-1,1)\times(-1,1)$, $p=~\infty$, $f\equiv~0, F(x,y)= |x|^{4/3} -|y|^{4/3}$,  and the results are summarized in Table~1 below. The explicit solution of this problem is the well-known Aronsson function $u(x,y)=|x|^{4/3} -|y|^{4/3}$. We denote by $N^2$ (so that $h=2/N$) the total number of nodes in the square grid, and by $n$ the number of iterations (reported between parentheses), while the error is given in the maximum norm and a {\it 2-level} circles iteration is used, for $\beta=0.99$. The initial condition ${\bf u}^0$ was assumed to be a perturbation of the exact solution (of order $\pm 20\%$).The total number of grid points and the number of directions used to compute ${\widehat C}_h^{\Delta\theta}$ have been choosen to make our tests comparable  with the ones for the 17-point and the 25-point stencils presented in Table~2 of \cite[pg. 1227]{O04}. Note that the errors decrease on the rows and on the columns in a regular way and that also the number of iterations  decreases if we increase the number of directions and $N$ simultaneously.  Our numerical results show essentially the same accuracy of those of \cite{O04}. We only remark that in our case, in order to significantly reduce the error when $h$ tend to zero, we should also increase the number of directions. 

\begin{table}[ht]\label{tab1}
\small
\begin{center}
\begin{tabular}{|c|c|c|c|c|c|}
\hline
Dir.&$N=41$&$N=81$&$N=161$&$N=241$&$N=401$\\
\hline
4&0.1105 (250)&0.0765 (448)&0.0373 (584)&0.0225 (589)&0.0122 (621)\\
\hline
8&0.0274 (80)&0.0182 (161)&0.0084 (214)&0.0069 (190)&0.0048 (188)\\
\hline
16&0.0084 (54)&0.0070 (75)&0.0043 (105)& 0.0033 (108)&0.0023 (112)\\
\hline
24&0.0088 (57)&0.0081 (73)&0.0050 (91)& 0.0035 (103)&0.0024 (107)\\
\hline
\end{tabular}
\caption{$L^\infty$- errors and iterations (in parentheses) for Test  1 on Aronsson function}
\end{center}
\end{table}

In Figure~2 one can see the numerical solutions and its contour plots obtained, when again $\Omega=(-1,1)\times(-1,1)$, $p=\infty$ and $f\equiv0$, for three different choices of the boundary data. These computations are included for comparison with  Figure~2  in \cite[pg. 1228]{O04}.  In the first two cases, we consider $F(x,y)=|x|^2 \, |y|^2$ and  $F(x,y)=x^3-3xy^2$. We use 24 controls, {\it 2-level} circles iterations, $\beta=0.99$, and  a grid with $401^2$ nodes.  The initial condition for this grid is generated by a multi-resolution type approach. More specifically, we start from a 21 by 21 coarse grid with initial values set to zero in the interior nodes; after a few iterations we interpolate the numerical solution on a finer grid and repeat the procedure up to the desired resolution. Although this start up procedure requires some time, but significantly improves the rate of convergence. The approximations in Figure~2 for these two cases took $n=15$ and $n=47$ iterations, respectively, to converge. In the third example, $F$ is the characteristic function of the point $(1,0)$. We proceed as described above, but we use 24 controls on a  {\it 4-level} circles iteration, again for $\beta=0.99$. In this test, the approximation took $n=2428$ iterations to converge with the required accuracy.

\begin{figure}
\begin{center}
\begin{tabular}{cc}
\resizebox{2.1in}{!}{\includegraphics{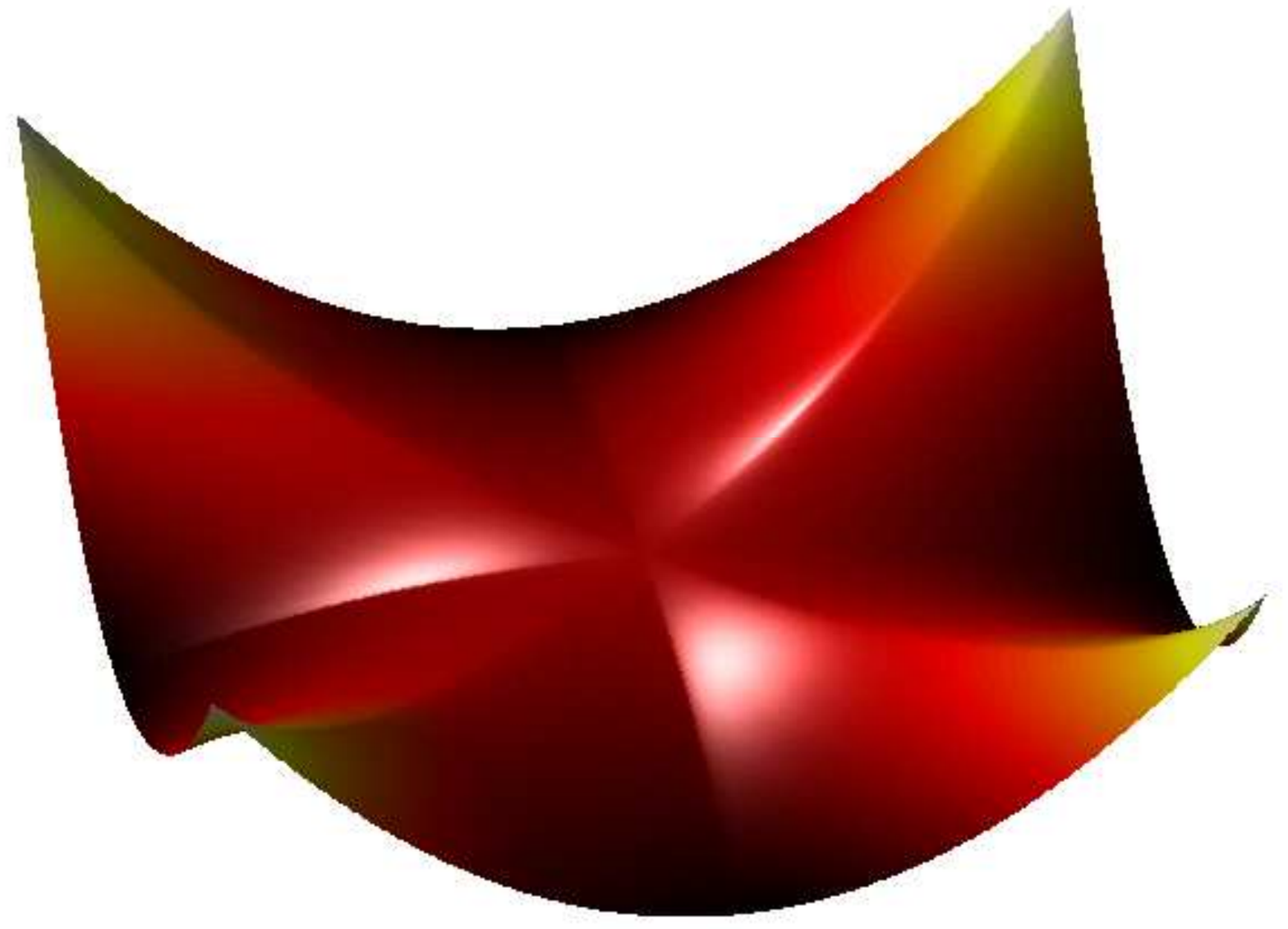}}& 
\resizebox{2.1in}{!}{\includegraphics{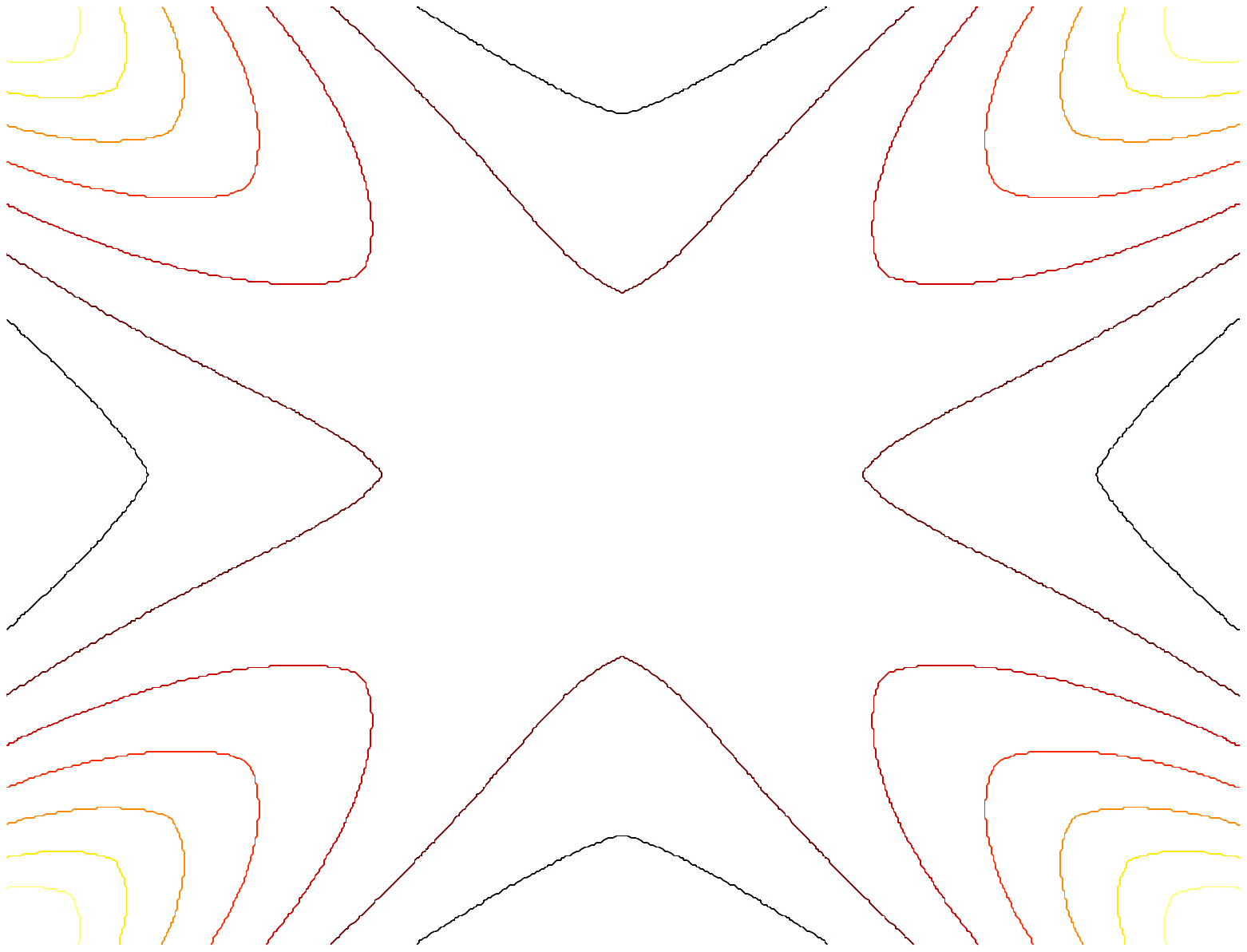}}\\
\resizebox{1.7in}{!}{\includegraphics{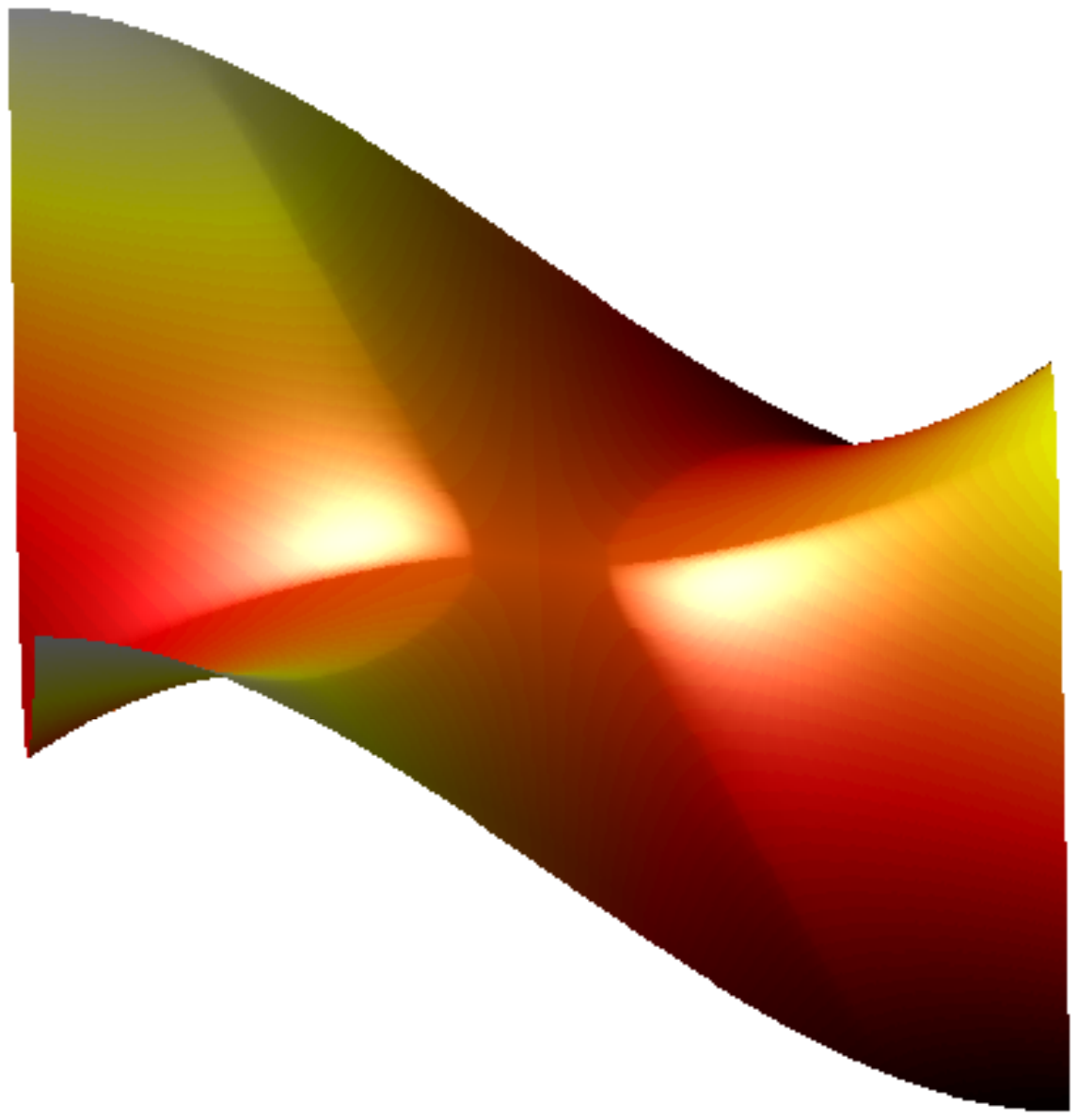}}&
\resizebox{2.1in}{!}{\includegraphics{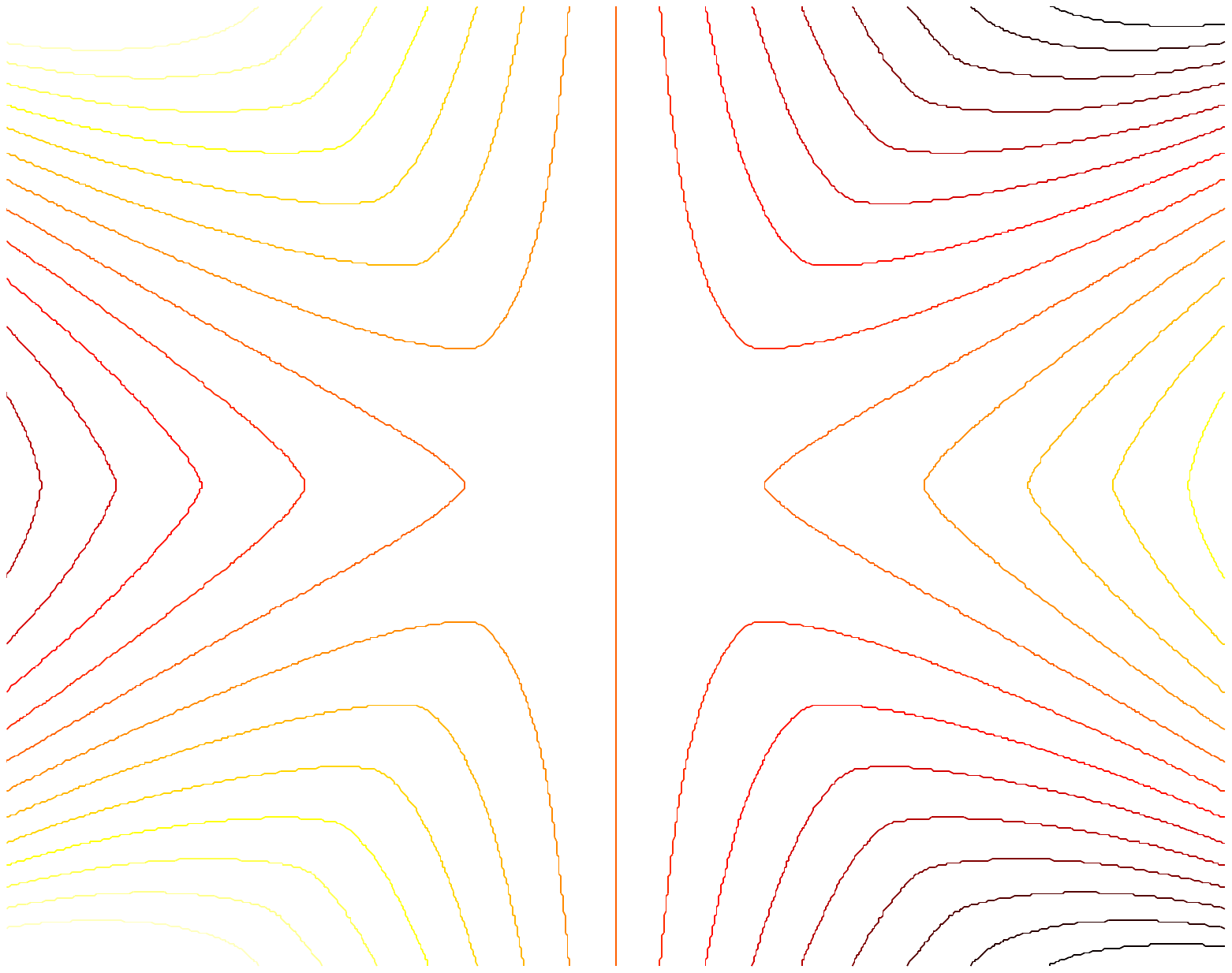}}\\
\resizebox{3in}{!}{\includegraphics{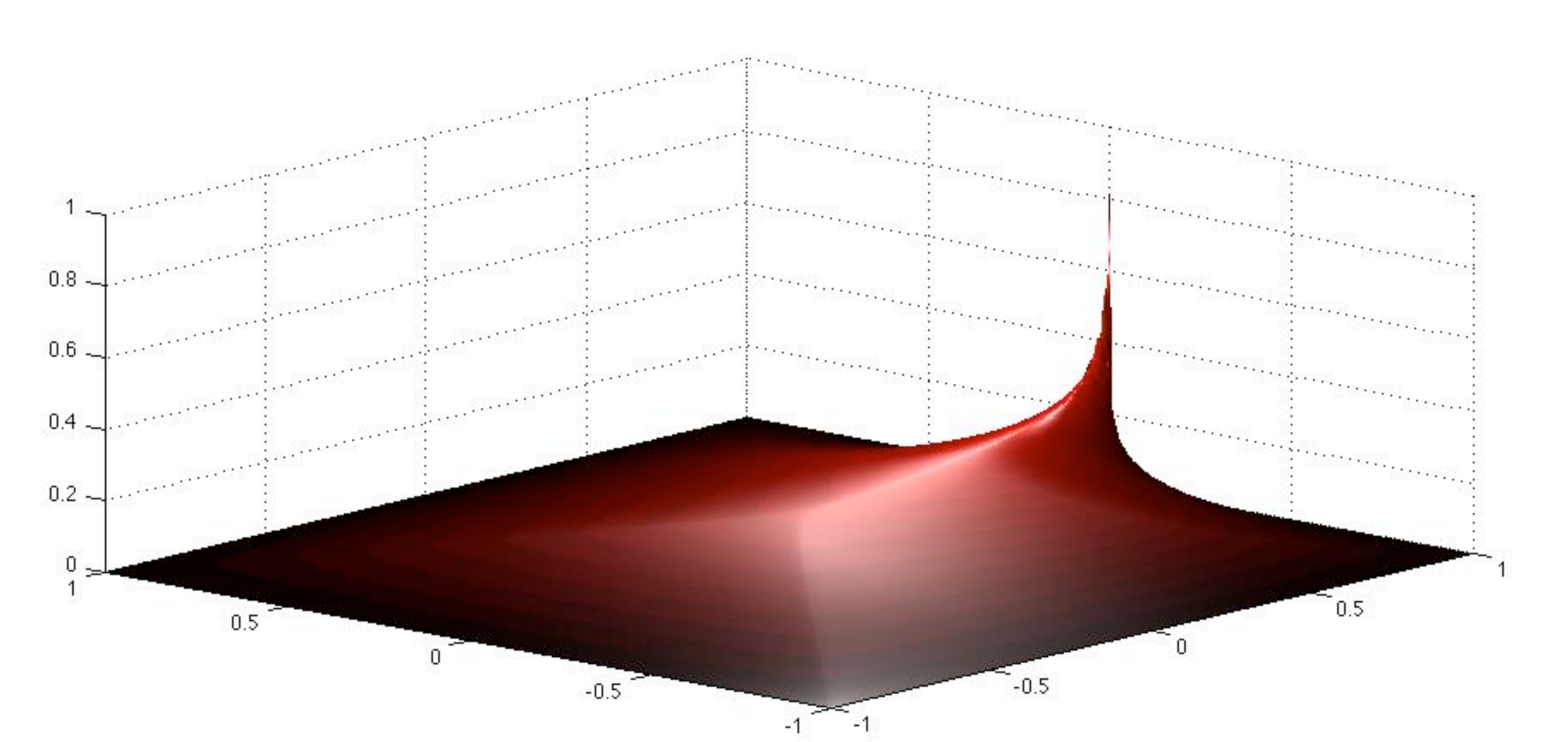}}&
\resizebox{1.7in}{!}{\includegraphics{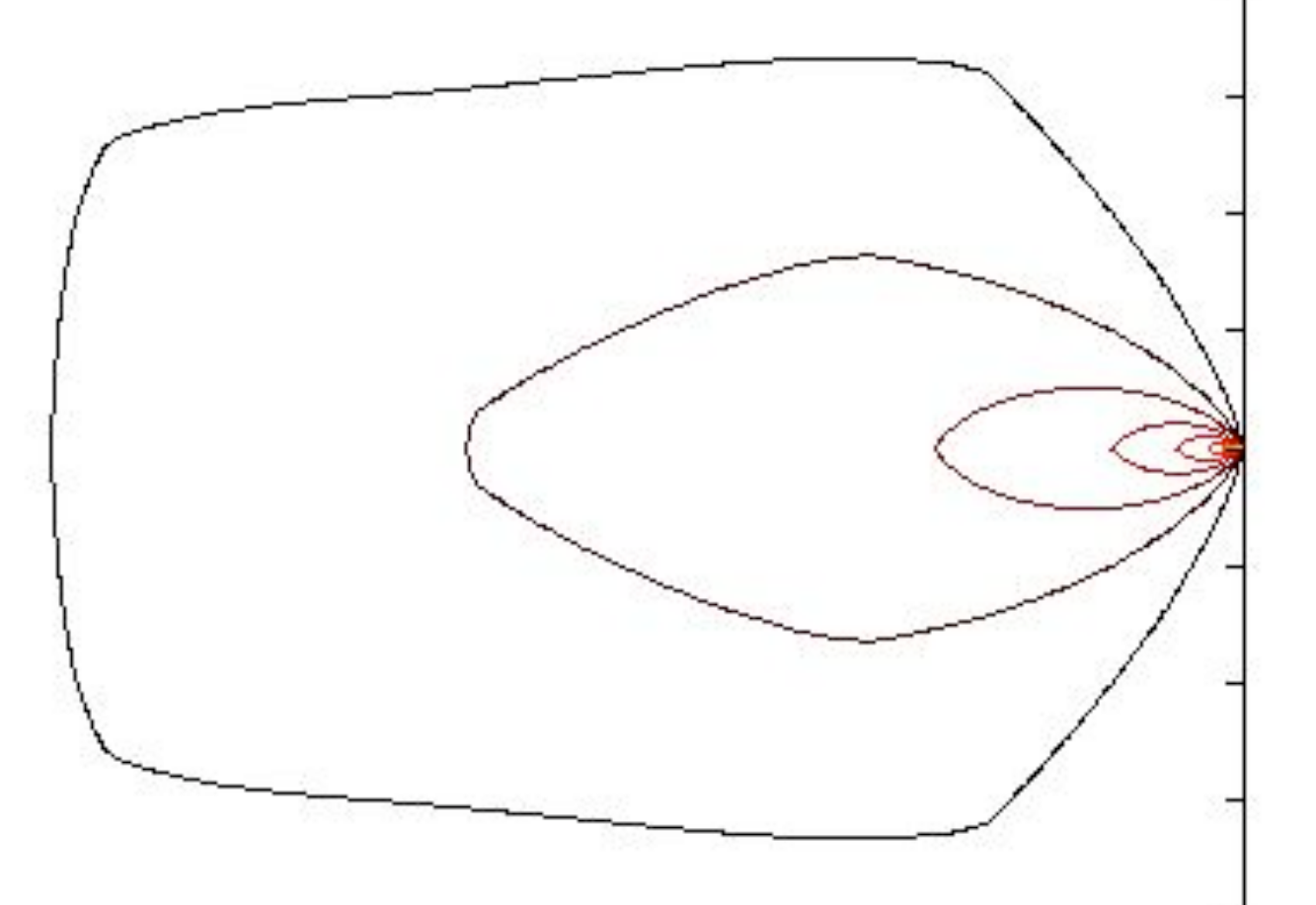}}
\end{tabular}
\end{center}
\caption{Test 1, numerical solutions and corresponding contour plots when $\Omega=(-1,1)^2$, $p=\infty$, $f=0$, with boundary data $F(x,y)=|x|^2 \, |y|^2$, $F(x,y)=x^3-3xy^2$, and $F$ the characteristic function of  point $(1,0)$. }
\end{figure} 

\medskip{\bf Test 2.}  We next consider the problem for the game $p$-Laplacian in a case where we were able to compute the exact solution. Starting with $\Omega=B(0,1)$, $f\equiv1$ and $F\equiv0$, and working in radial coordinates we derived the solution for any $p\geq 2$, that is $v(x,y)=  \frac{1-x^2 -y^2}2$.  This  is an analytic function in the whole plane and has a unique extrema at the origin. By looking at $v$ on the unit square $Q=(-1,1)\times (-1,1)$, we see that this function verifies, for any $p\geq 2$,  $-\Delta_p^G v = 1$ in $Q$. To test our code, we have implemented it on the problem:  $-\Delta_p^G u = 1$ in $Q$, for $F(x,y) =  \frac{1-x^2 -y^2}2$ on the boundary.  We summarize the numerical results obtained with scheme (\ref{scheme-ellip}) for the cases  $p=5$ and $p=\infty$ in Tables~2 and 3 below, showing the $L^\infty$-errors and the number of iterations of the algorithm until convergence ($E_n\le10^{-5}$, for this test) for different combinations of levels and directions. The initial iteration was set to $\min F= -1/2$ at the interior nodes.


\begin{table}[ht]\label{tab2bis}
\small
\begin{center}
\begin{tabular}{|c|c|c|}
\hline
Nodes (levels)& 16 directions error (iter) & 24 directions error (iter)\\
\hline
21 (2) &0.0634 (163)&0.0617 (180) \\
\hline
21 (4) &0.0241 (50)&0.0192 (107) \\
\hline
41 (4) &0.0201 (213) & 0.0191 (163) \\
\hline
\end{tabular}
\caption{Test 2, $L^\infty$-errors and iterations for $p=5$, $f\equiv1$, $F(x,y)=(1-x^2 -y^2)/2$, $\beta=0.9$. }
\end{center}
\end{table}


\begin{table}[h]\label{tab3bis}
\small
\begin{center}
\begin{tabular}{|c|c|c|}
\hline
Nodes (levels)& 16 directions error (iter) & 24 directions error (iter)\\
\hline
21 (2) &0.0590 (249)&0.0563 (248) \\
\hline
21 (4) &0.0211 (80)&0.0185 (77)\\
\hline
41 (4) &0.0192 (272) & 0.0156 (272) \\
\hline
\end{tabular}
\caption{Test 2, $L^\infty$-errors and iterations for $p=\infty$, $f\equiv1$, $F(x,y)=(1-x^2 -y^2)/2$, $\beta=0.9$. }
\end{center}
\end{table}

\medskip{\bf Test 3.}  ({\it The tug-of-war game}) We finally run the code on the rectangle $\Omega=(-2,2)\times(-1,1)$,  for different values of $p$, with $f\equiv1$ and $F\equiv0$. In particular, the case $p=\infty$ and $f\equiv1$ corresponds to a running cost in the tug-of-war game. The exact solution  is not known, and to our knowledge these are the first numerical results for a solution without radial symmetry. More precisely, the explicit solution is known only in a part of the domain, where it can be computed based on ideas from \cite{PSSW}. That is, for any $-1<x<1$, the exact solution is given by  $u(x,y)=\frac{1-y^2}2$. We recover such solution in this part of the domain with our numerics.
 We  set the initial condition ${\bf u}^0$ to $\min F= 0$ in the interior nodes,  and run the scheme $(\ref{scheme-ellip})$ with  {\it 4-level}  circles iterations, with $\beta=0.8$, until the difference of the values at (0,0) of two consecutive  approximations  is less than $10^{-6}$ (note that $u(0,0)=0.5$ is the maximum value of the exact solution). Our tests have shown that the scheme converges for any choice of interior values for ${\bf u}^0$, with of course only a different number of iterations. Here we summarize in Table~4 and in Figure~3 below the results of the simulation with 16 controls (due to the axis-oriented solution, there is no real advantage in this case to use more directions).
 
We have also included Table 5 in order to compare the multi-level circle approaches: as expected the table shows some acceleration of convergence with larger multi-level circles, since the information from the boundary can reach the interior of the domain quicker for larger circles.
 
\begin{figure}[h]
\begin{center}
\begin{tabular}{cc}
\resizebox{2.8in}{!}{\includegraphics{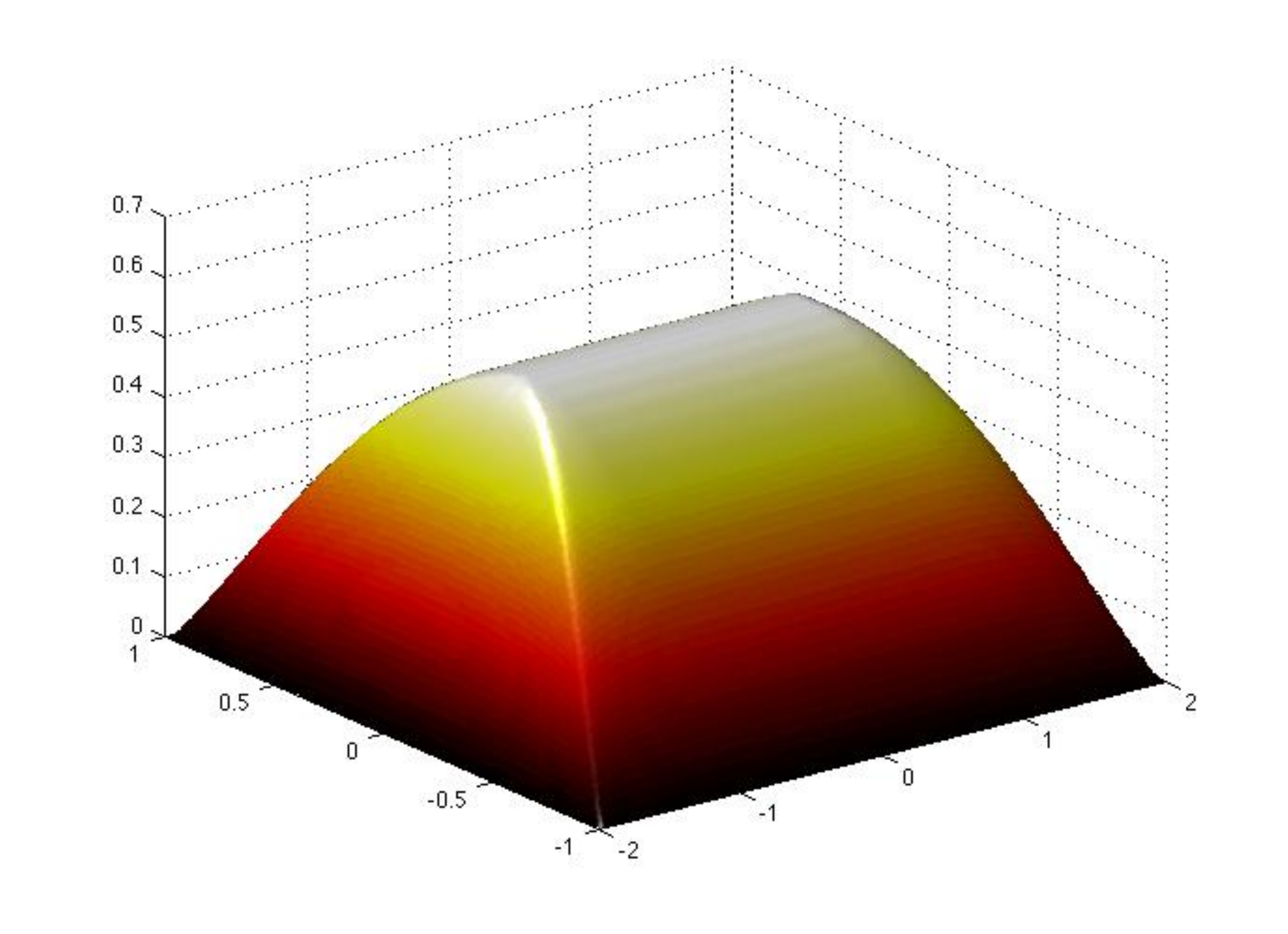}} \ 
\resizebox{2.1in}{!}{\includegraphics{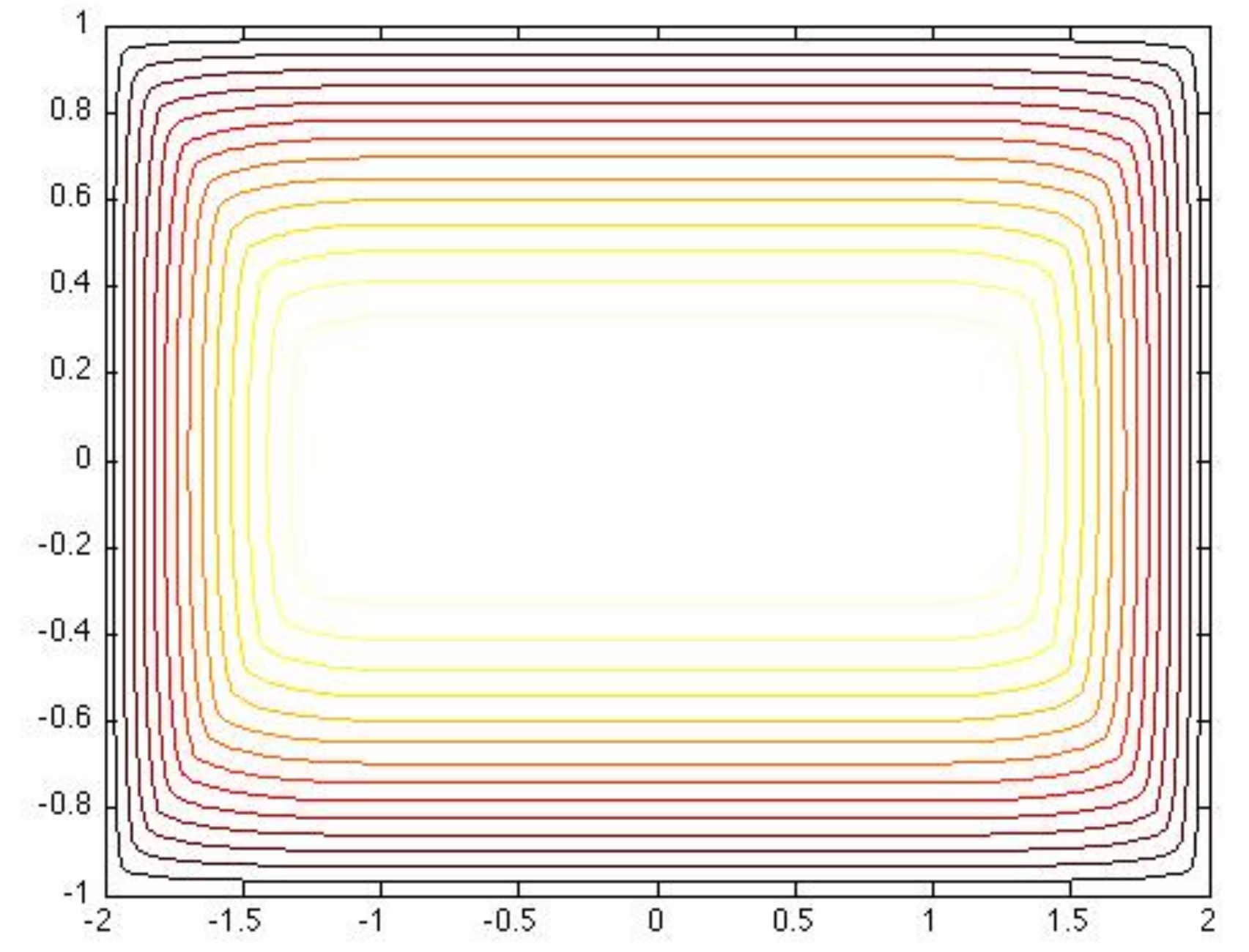}}
\end{tabular}
\end{center}
\caption{Test 3, surface and contour plots of the approximate solution in  the rectangle $(-2,2) \times (-1,1)$ for $p=\infty$, $f\equiv1$, $F\equiv0$, $201\times 101$ grid points and 16 directions.  }
\end{figure}

\begin{table}[h]\label{tab4}
\small
\begin{center}
\begin{tabular}{|c|c|c|c|}
\hline
grid&error at (0,0)&iterations&CPU time\\
\hline
$161\times 81$&0.0276&1112&236\\
\hline
$241\times 121$&0.0155&2205&957\\
\hline
$321\times 161$&0.0094&3578&2681\\
\hline
\end{tabular}
\caption{Test 3, $p=\infty$, $\Omega=(-2,2)\times(-1,1)$, $f\equiv1$, $F\equiv0$, $\beta=0.8$, 16 directions. }
\end{center}
\end{table}

\begin{table}[h]\label{tab5}
\small
\begin{center}
\begin{tabular}{|c|c|c|c|}
\hline
levels&error at (0,0)&iterations&CPU time\\
\hline
4&0.0276&1112&236\\
\hline
2&0.0260&3330&621\\
\hline
1&0.0917&9206&1881\\
\hline
\end{tabular}
\caption{Test 3, $p=\infty$, $\Omega=(-2,2)\times(-1,1)$, $f\equiv1$, $F\equiv0$, $\beta=0.8$, 16 directions, $161\times 81$ nodes. }
\end{center}
\end{table}

\bigskip

\section[]{Appendix: some $p$-average properties}\label{appe}

For reader's convenience we provide here some elementary properties satisfied by the $p$-average of finite sets of real numbers.

For a fixed set $S=\{s_1,...,s_m\}\subset {\R}$, $s\in {\R}$ and $p>1$,  we define the function
\begin{equation}\label{f-average}
Q(s,S)=\sum_{ j=1}^m |s_j-s|^{p},
\end{equation}
whose derivative with respect to $s$ is easily computed as
\begin{equation}\label{f-der}
\frac{\partial Q}{\partial s}(s,S)=-p \sum_{ j=1}^m  |s_j-s|^{p-1}\,\sgn (s_j-s)=p\,\sum_{ s_j\neq s} |s-s_j|^{p-2}\,(s-s_j).
\end{equation}
For any $p> 1$ one can also compute the second derivative of $Q(s,S)$ with respect to $s$:
\begin{equation}\label{f-der2}
\frac{\partial^2 Q}{\partial s^2}(s,S)=p \, (p-1) \sum_{ j=1}^m  |s_j-s|^{p-2}.
\end{equation}
In the case $1<p<2$ such relation has to be understood in the weak sense of $W^{2,1}$ functions.

\begin{rem}\label{r7.1}
The function $Q$ has exactly one extremum and is convex in $s$, and the p-average $A_p(S)$ is the only value for which $\displaystyle{\frac{\partial Q}{\partial s}(A_p(S),S)=0}$, hence  $\displaystyle{\frac{\partial Q}{\partial s}(s,S)<0}$ for every $s<A_p(S)$, and 
$\displaystyle{\frac{\partial Q}{\partial s}(s,S)>0}$ for $s>A_p(S)$.
Additionally, $A_p(S)$ solves implicitly the following equation:
\begin{eqnarray}\label{chara}
A_p(S)=\frac{\sum_{s_j\neq A_p(S)} |s_j-A_p(S)|^{p-2}\,s_j}{\sum_{ s_j\neq A_p(S)} |s_j-A_p(S)|^{p-2}}\ .
\end{eqnarray}

\end{rem}

\begin{lem}\label{trivial}
 Let $S=\{s_1, s_2,...,s_m\}$ be a finite set of real numbers, and for $k \in {\R}$ let $S+k=\{s_1+k, s_2+k,...,s_m+k\}$. The following assertions hold true for $1\leq p \leq \infty$:
 \begin{equation}\label{transla}
 A_p(S+k)=A_p(S)+k,
 \end{equation}
\begin{equation}
\min_{j=1..m} s_j \leq A_p(S) \leq \max_{j=1..m} s_j  \ .
\end{equation}
\end{lem}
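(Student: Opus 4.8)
\medskip\noindent\textbf{Proof proposal.} The plan is to handle the translation identity $(\ref{transla})$ and the two-sided bound separately, and in each case to split according to the three defining relations $(\ref{p-ave})$, $(\ref{infty-ave})$, $(\ref{one-ave})$, i.e. the regimes $1<p<\infty$, $p=\infty$, $p=1$. For the translation identity in the range $1<p<\infty$, the one observation needed is that the objective function in $(\ref{f-average})$ satisfies $Q(c+k,\,S+k)=\sum_{j=1}^m|(s_j+k)-(c+k)|^p=\sum_{j=1}^m|s_j-c|^p=Q(c,S)$ for every $c\in\R$; thus $c\mapsto Q(c,S)$ and $c\mapsto Q(c,S+k)$ have the same graph up to the shift $c\mapsto c+k$, and since $A_p(S)$ and $A_p(S+k)$ are by definition the (unique, by convexity) minimizers of the former and the latter, $A_p(S+k)=A_p(S)+k$ follows at once. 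For $p=\infty$ the identity is immediate from $(\ref{infty-ave})$, using $\max(S+k)=\max S+k$ and $\min(S+k)=\min S+k$; for $p=1$ it holds because sorting $S+k$ gives the sorted list of $S$ shifted by $k$, so the median shifts by $k$, and the same is true of the average of the two middle entries when $m$ is even.

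For the bound, write $s_{\min}=\min_j s_j$ and $s_{\max}=\max_j s_j$. In the range $1<p<\infty$ I would use a direct comparison: if $c<s_{\min}$ then for every $j$ one has $|s_j-c|=s_j-c>s_j-s_{\min}=|s_j-s_{\min}|\ge 0$, hence $|s_j-c|^p>|s_j-s_{\min}|^p$ by strict monotonicity of $t\mapsto t^p$ on $[0,\infty)$, and therefore $Q(c,S)>Q(s_{\min},S)$. So no $c<s_{\min}$ can minimize $Q(\cdot,S)$, which gives $A_p(S)\ge s_{\min}$, and the argument is symmetric for the upper bound. Alternatively one can invoke Remark~\ref{r7.1}: evaluating $(\ref{f-der})$ at $s_{\min}$ gives $\frac{\partial Q}{\partial s}(s_{\min},S)=p\sum_{s_j\ne s_{\min}}|s_{\min}-s_j|^{p-2}(s_{\min}-s_j)\le 0$, and by the sign information in Remark~\ref{r7.1} this forces $s_{\min}\le A_p(S)$; likewise $\frac{\partial Q}{\partial s}(s_{\max},S)\ge 0$ gives $A_p(S)\le s_{\max}$. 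For $p=\infty$ the bound is trivial since $A_\infty(S)=\tfrac12(\max S+\min S)$ obviously lies between $\min S$ and $\max S$, and for $p=1$ the median (with either convention) clearly lies between the smallest and the largest element.

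I do not expect a genuine obstacle: every step reduces either to the elementary monotonicity of $t\mapsto t^p$ on $[0,\infty)$ or to reading off the explicit formulas $(\ref{infty-ave})$, $(\ref{one-ave})$. The only places that deserve a line of care are the even-$m$ median convention in the case $p=1$, and the point that the $1<p<2$ argument must rely only on the definition of $A_p(S)$ as a minimizer and on the convexity/uniqueness already recorded in Remark~\ref{r7.1}, rather than on the second derivative $(\ref{f-der2})$, which is valid only in the weak sense there.
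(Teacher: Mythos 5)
Your proof is correct and follows essentially the same route as the paper: the translation identity is obtained from the uniqueness of the minimizer (the shift $Q(c+k,S+k)=Q(c,S)$ being the implicit content of the paper's appeal to uniqueness), and your second argument for the bounds — checking the signs of $\frac{\partial Q}{\partial s}$ at $\min_j s_j$ and $\max_j s_j$ via $(\ref{f-der})$ and Remark~\ref{r7.1} — is exactly the paper's proof, with the cases $p=1,\infty$ likewise dispatched as immediate from the definitions.
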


\begin{proof} The cases $p=1$, $p = \infty$ are trivial. For $p> 1$, the first assertion follows by the uniqueness of $A_p(S)$, while the second follows by Remark~\ref{r7.1}, and the fact that if $\displaystyle{s_*=\min_{j=1..m} s_j}$ and $\displaystyle{s^*=\max_{j=1..m} s_j}$ then
by $(\ref{f-der})$ one has $\displaystyle{\frac{\partial Q}{\partial s}(s_* ,S)\leq 0}$ and $\displaystyle{\frac{\partial Q}{\partial s}(s^*,S)\geq 0}$.
\end{proof}

\begin{lem}\label{p-mono}
Let $S=\{s_1, s_2,...,s_m\}$ and $T=\{t_1, t_2,...,t_m\}$ be two finite sets of real numbers having the same number $m$ of elements, and let $1\leq p \leq \infty$ be fixed. If it holds that $t_j\leq s_j$, for every $j=1,...,m$, then we have $A_p(T)\leq A_p(S)$.
\end{lem}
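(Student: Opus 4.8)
The plan is to reduce the statement to the sign analysis of $\partial Q/\partial s$ already recorded in Remark~\ref{r7.1} and in formula~(\ref{f-der}), after disposing of the two extreme exponents by hand. For $p=\infty$ note that $A_\infty(R)=\tfrac12(\min_j r_j+\max_j r_j)$ and both $\min_j r_j$ and $\max_j r_j$ are nondecreasing functions of each entry, so $t_j\le s_j$ for all $j$ gives $A_\infty(T)\le A_\infty(S)$ at once; for $p=1$ the same follows because the $k$-th order statistic of $T$ is $\le$ the $k$-th order statistic of $S$ whenever $T$ is dominated entrywise by $S$, and the median (with our convention for an even number of points) is a fixed affine combination of order statistics. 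So the only case with content is $1<p<\infty$.

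For $1<p<\infty$ and a finite set $R=\{r_1,\dots,r_m\}$, I would introduce the auxiliary function
\[
\Phi_R(s):=-\frac1p\,\frac{\partial Q}{\partial s}(s,R)=\sum_{j=1}^m |r_j-s|^{p-1}\sgn(r_j-s)=\sum_{j=1}^m \psi(r_j-s),\qquad \psi(\tau):=|\tau|^{p-1}\sgn(\tau).
\]
The two facts I need are: (a) $\psi$ is continuous and strictly increasing on $\R$ (for $\tau>0$ it equals $\tau^{p-1}$, for $\tau<0$ it equals $-(-\tau)^{p-1}$, with value $0$ at $\tau=0$), hence $\Phi_R$ is continuous and strictly decreasing in $s$; and (b) by Remark~\ref{r7.1}, $A_p(R)$ is the unique zero of $\Phi_R$, with $\Phi_R(s)>0$ for $s<A_p(R)$ and $\Phi_R(s)<0$ for $s>A_p(R)$.

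Then I would compare $\Phi_T$ and $\Phi_S$ pointwise: since $t_j\le s_j$ for every $j$ and $\psi$ is nondecreasing, $\psi(t_j-s)\le\psi(s_j-s)$ for each $j$ and every $s$, so $\Phi_T(s)\le\Phi_S(s)$ for all $s\in\R$. Evaluating at $s=A_p(S)$ and using $\Phi_S(A_p(S))=0$ yields $\Phi_T(A_p(S))\le 0$. By property (b) applied to $T$, a point at which $\Phi_T\le 0$ cannot lie strictly to the left of the unique zero $A_p(T)$; hence $A_p(S)\ge A_p(T)$, which is the assertion.

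I do not expect a genuine obstacle: this is an elementary monotonicity fact. The only points worth a word of care are that the argument must avoid the second derivative $\partial^2 Q/\partial s^2$, which for $1<p<2$ exists only in the weak sense noted in the Appendix — the proof above uses solely the continuity and monotonicity of the first derivative (equivalently, of $\psi$), so it is uniform in $p\in(1,\infty)$ — and that $\Phi_R$ stays continuous and strictly monotone even when $s$ coincides with one of the $r_j$ (the corresponding term is $\psi(0)=0$), so the uniqueness of the zero invoked in (b) is unaffected by such coincidences.
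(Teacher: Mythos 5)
Your proof is correct and is essentially the paper's argument: the paper also reduces to the sign of $\partial Q/\partial s$ via Remark~\ref{r7.1} and the monotonicity of $M_p(r)=|r|^{p-2}r$, which is exactly your $\psi$ up to the sign convention. The only cosmetic difference is that the paper evaluates the derivative of $Q(\cdot,S)$ at $A_p(T)$ while you evaluate $\Phi_T$ at $A_p(S)$ --- a mirror image of the same one-line comparison.
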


\begin{proof} According to the definition of $p$-average given in $(\ref{p-ave})$ the lemma clearly holds  for the cases $p=1,$ and $ p=\infty$. 

Assume $1<p<\infty$. For $r\in{\cal R}$ define the function $M_p(r)= |r|^{p-2} r$ if  $r\neq 0$ and $M_p(0)=0$, note that $M_p(r)$ is a continuous increasing function.

Given $t:=A_p(T)$, by Remark~\ref{r7.1} we know that $\frac{\partial Q}{\partial s}(t,T)=0$, thus by equation (\ref{f-der}) we obtain 
 $\sum_{ j=1}^m M_p(t-t_j) =0$. But, since $t_j \leq s_j$ it holds $M_p(t-s_j) \leq M_p(t-t_j)$, therefore $\sum_{ j=1}^m M_p(t-s_j) \leq \sum_{ j=1}^m M_p(t-t_j) =0$, which gives  $\frac{\partial Q}{\partial s}(t,S)\leq 0$. By  Remark~\ref{r7.1}, we conclude $A_p(S) \geq t:=A_p(T)$.

\end{proof}

\begin{lem}\label{p-stab}
Let $S$ and $T$ be two finite sets of real numbers having the same number of elements, and let $1\leq p \leq \infty$ be fixed. Assume that $S=\{s_1, s_2,...,s_m\}$ and $T=\{t_1, t_2,...,t_m\}$ verify $t_j= s_j+\delta_j$, for every $j=1,...,m$,   where  $|\delta_j|<\delta$ for some $\delta >0$, then one has 
\begin{equation}\label{Lip}
A_p(S)-\delta \leq A_p(T)\leq A_p(S) + \delta.
\end{equation}
\end{lem}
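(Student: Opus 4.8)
The plan is to deduce this Lipschitz-type estimate directly from the monotonicity of the $p$-average (Lemma~\ref{p-mono}) together with its behavior under translations (equation~\eqref{transla} in Lemma~\ref{trivial}), both of which are already available for the full range $1\le p\le\infty$. The idea is to sandwich the set $T$ between two translated copies of $S$.

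First I would record that the hypothesis $|\delta_j|<\delta$ for all $j$ yields the element-wise bounds $s_j-\delta\le t_j\le s_j+\delta$ for every $j=1,\dots,m$. Introducing the shifted sets $S-\delta=\{s_1-\delta,\dots,s_m-\delta\}$ and $S+\delta=\{s_1+\delta,\dots,s_m+\delta\}$, the left inequality says that each element of $S-\delta$ is $\le$ the corresponding element of $T$, and the right inequality says each element of $T$ is $\le$ the corresponding element of $S+\delta$.

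Next I would apply Lemma~\ref{p-mono} twice: from $s_j-\delta\le t_j$ we get $A_p(S-\delta)\le A_p(T)$, and from $t_j\le s_j+\delta$ we get $A_p(T)\le A_p(S+\delta)$. Finally, invoking the translation identity~\eqref{transla} with $k=-\delta$ and $k=+\delta$ gives $A_p(S-\delta)=A_p(S)-\delta$ and $A_p(S+\delta)=A_p(S)+\delta$, and chaining the inequalities produces exactly \eqref{Lip}.

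There is essentially no serious obstacle here; the only point that warrants a word is that we are allowed to pass from the strict bounds $|\delta_j|<\delta$ to the non-strict conclusion because Lemma~\ref{p-mono} is stated with $\le$, so the weakened element-wise inequalities $s_j-\delta\le t_j\le s_j+\delta$ suffice. One should also note explicitly that both auxiliary lemmas were proved for all $p$ in $[1,\infty]$, so no separate treatment of the endpoint cases $p=1$ or $p=\infty$ is needed.
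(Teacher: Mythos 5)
Your argument is exactly the paper's proof: sandwich $T$ element-wise between $S-\delta$ and $S+\delta$, apply Lemma~\ref{p-mono} twice, and then use the translation identity~\eqref{transla} to identify $A_p(S\pm\delta)=A_p(S)\pm\delta$. It is correct and complete; the extra remarks about strict versus non-strict inequalities and the endpoint cases $p=1,\infty$ are fine but not needed beyond what the cited lemmas already cover.
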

\begin{proof}
Since $s_j-\delta \leq t_j \leq s_j+\delta$, Lemma~\ref{p-mono} implies $A_p(S-\delta)\leq A_p(T)\leq A_p(S+\delta)$. But, from equation (\ref{transla}) we know
$A_p(S-\delta)=A_p(S)-\delta$, and $A_p(S+\delta)=A_p(S)+\delta$, so that (\ref{Lip}) follows.
 \end{proof}

\bigskip{\bf Acknowledgements.} We wish to thank one of the referees for his/her useful suggestions which contributed to improve our presentation and to simplify some of the proofs.

 \end{document}